\documentclass[11pt,a4paper,twoside]{amsart}
\usepackage[UKenglish]{babel}
\usepackage[T1]{fontenc}
\usepackage[utf8x]{inputenc}
\usepackage{latexsym,amsfonts,amsmath,amsthm,amssymb,mathrsfs}
\usepackage{fullpage}
\usepackage{xcolor}
\usepackage{paralist}
\usepackage{todonotes}
\usepackage{dsfont}
\usepackage{float}
\usepackage{caption}
\usepackage{xfrac}
\usepackage{hyperref}
\usepackage{ulem}

\usepackage{graphicx}

\usepackage{fourier}

\makeatletter
\@namedef{subjclassname@2010}{%
	\textup{2010} Mathematics Subject Classification}
\makeatother

\subjclass[2010]{Primary 42A55, 60F05; Secondary 11D04, 11D45}

\DeclareMathOperator{\R}{\mathbb{R}}
\DeclareMathOperator{\F}{\mathcal{F}}
\DeclareMathOperator{\N}{\mathbb{N}}

\DeclareMathOperator{\T}{\mathbb{T}}
\DeclareMathOperator{\E}{\mathbb{E}}

\DeclareMathOperator{\sign}{sign}


\newtheorem{thm}{Theorem}

\newtheorem{lem}{Lemma}[section]
\newtheorem{cor}[lem]{Corollary}

\newtheorem{prop}[lem]{Proposition}
\newtheorem{rem}[lem]{Remark}
\newtheorem{Ass}{Assumption}

\newtheorem*{rem*}{Remark}

\allowdisplaybreaks
\setlength{\parindent}{0pt}

\title{\bf
On a Problem of Kac concerning Anisotropic Lacunary Sums}
\medskip
\author{Lorenz Fruehwirth and Manuel Hauke}

\date{}

\begin{document}
 
\maketitle

\begin{abstract}
    Given a lacunary sequence $(n_k)_{k \in \N},$ arbitrary positive weights $(c_k)_{k \in \N}$ that satisfy a Lindeberg-Feller condition, and a function $f: \T \to \R$ whose Fourier coefficients $\hat{f_k}$ decay at rate $\frac{1}{k^{1/2 + \varepsilon}}$, we prove central limit theorems for $\sum_{k \leq N}c_kf(n_kx)$, provided $(n_k)_{k \in \N}$ satisfies a Diophantine condition that is necessary in general. This addresses a question raised by M. Kac [Ann. of Math., 1946].
\end{abstract}

\section{Introduction and Main Results}

Lacunary sums are classical objects at the crossroads of number theory, dynamical systems, and the asymptotic theory of weakly dependent random variables. In this setup, one considers an integer sequence $(n_k)_{k \in \N}$ satisfying the Hadamard-gap condition
\begin{equation}
    \label{eq:Hadamard_gap}
    \liminf_{k \rightarrow \infty} \frac{n_{k+1}}{n_k} \geq q > 1,
\end{equation}
and for a sufficiently regular function $f: \T = \R/\mathbb{Z} \rightarrow \R$ with mean $0$, we set
\[
S_N(x) := \sum_{k \leq N} f(n_k x), \quad N \in \N,\, x \in \T.
\]
For over a century, mathematicians have been interested in the limiting behavior of these sums as $N \rightarrow \infty$. Early works trace back to Borel \cite{Borel1909} who studied sums $\sum_{k \leq N} f(2^k x)$, generated by certain indicator functions $f$, in order to understand normal number properties, and to Weyl \cite{Weyl1916}, who was interested in the uniform distribution properties of sequences of the form $(n_k x)_{k \in \N}$. Researchers soon recognized profound connections to other fields and started to investigate lacunary sums as an independent topic. In 1924, Kolmogorov proved in his article \cite{kolmogoroff1924} that
\[
\sum_{k \leq N} a_k \cos(2 \pi n_k x) 
\]
converges for almost all $x \in \T$ if one assumes square summability of the coefficients $(a_k)_{k \in \N}$. A few years passed before researchers succeeded in establishing the first results on the scale of the central limit theorem. In his groundbreaking work \cite{Kac1946Clt}, Kac proved in 1946 that for any mean $0$ function $f: \T \rightarrow \R$ whose Fourier coefficients decay sufficiently fast, we have
\[
\frac{1}{\sqrt{N}}\sum_{k \leq N} f(2^k x) \stackrel{w}{\longrightarrow} \mathcal{N}(0,\sigma^2)
\]
for some suitable $\sigma > 0$.
Naively one would expect the variance $\sigma^2$ to equal $\int_{\T} f(x)^2\, \mathrm{d}x$ which would be in accordance with true independent behaviour; however it turned out that 
the resulting (asymptotic) variance is given by
\[
\sigma^2 = \int_{\T} f(x)^2 \,\mathrm{d}x + 2\sum_{k=1}^{\infty} \int_{\T} f(2^k x) f(x) \,\mathrm{d}x.
\]
This deviation from the i.i.d. random case arises from an intriguing correlation structure, which, as later discovered, is inherent to certain dynamical systems. In the same article as well as in his later work \cite{K49}, Kac also studied super-lacunary sequences, i.e. where
\begin{equation}
\label{eq:large_gaps}
\lim_{k \rightarrow \infty} \frac{n_{k+1}}{n_k} = \infty.
\end{equation}
In this case, a central limit theorem was established which perfectly aligns with the classical one for a sequence of independently, uniformly distributed random variables in $[0,1)$, where the asymptotic variance is given by $ \int_{\T} f(x)^2 \,\mathrm{d}x$. 
In the spirit of Kolmogorov, Kac was even able to deal with anisotropic lacunary sums of the form
\[
\sum_{k \leq N} c_k f(n_k x),
\]
where the sequence $(c_k)_{k \in \N}$ satisfies a Lindeberg-Feller-type condition, i.e. he assumed
\begin{equation}
\label{eq:lindeberg_weights}
\max_{k \in \N} c_k < \infty \quad \text{and} \quad  \sum_{k \leq N} c_k^2 \stackrel{N \rightarrow \infty}{\longrightarrow} \infty.
\end{equation}

In his seminal article \cite{Kac1946Clt}, following a communication with Paul Erd\H os, Kac expressed his desire to obtain central limit theorems for 
\begin{equation}
\label{eq:Kac_general_clt}
\frac{\sum_{k \leq N } c_k f(n_k x)}{ \left | \left | \sum_{k \leq N } c_k f(n_k x) \right | \right|_2},
\end{equation}

where $(n_k)_{k \in \N}$ is an \textit{arbitrary} lacunary sequence only satisfying Hadamard's condition \eqref{eq:Hadamard_gap}. It soon became clear that this goal was not attainable in full generality, even when the generating function $f:\T \rightarrow \R$ was restricted to being a trigonometric polynomial. The famous example of Erd\H os and Fortet (see \cite[p.646]{K49}) demonstrates that choosing $f(x) = \cos(2 \pi x) + \cos(4 \pi x)$ and $n_k := 2^k - 1$ results in a lacunary sum $S_N = \sum_{k \leq N} f(n_k x)$ that satisfies a non-degenerate distributional limit theorem, with the asymptotic distribution being non-Gaussian. However, Salem and Zygmund \cite{salem_zyg_1947} proved a central limit theorem for $\sum_{k \leq N} c_k \cos(n_kx)$, where $(n_k)_{k \in \N}$ is arbitrary and the weights $(c_k)_{k \in \N}$ are assumed to only satisfy the minimal condition \eqref{eq:lindeberg_weights}. These examples illustrate that the asymptotic behavior of $S_N = S_N(f,(n_k)_{k \in \N})$ is determined not only by the function $f$ but also by the number-theoretic properties of the sequence $(n_k)_{k \in \N}$, as well as by a subtle interplay between these two entities. For a more comprehensive discussion on the foundations of lacunary sums and their connections to various topics in modern mathematics, we refer the interested reader to the excellent recent survey article \cite{ABT_survey_2023}.\\
\par{}

Over the years mathematicians tried to establish central limit theorems of the type in \eqref{eq:Kac_general_clt} from various directions. The $q$-adic lacunary sequences $n_k = q^k$, where $q \in \N_{\geq 2}$ play a special role since one can relate lacunary sums of the form $\sum_{k \in \N} f(q^k x)$ to the theory of dynamical systems via interpreting $q^k x$ as a $k$-fold composition of the $q$-adic shift-operator $\mathcal{T}_q: \T \rightarrow \T$ defined by $x \mapsto q x$. In recent years, it was shown in \cite{conze2007limit} that $\mathcal{T}_q \circ \ldots \circ \mathcal{T}_q$ can be replaced by a composition of potentially different operators $\mathcal{T}^1 \circ \mathcal{T}^2 \circ \ldots$, and additionally the function $f$ can be replaced by a sequence of functions $f_1, f_2, \ldots$ under some reasonably mild additional conditions: There, a central limit theorem is established for random variables of the form
\[
\frac{\sum_{k \leq N} f_k( \mathcal{T}_k \circ \ldots \circ \mathcal{T}_1(x))}{\left | \left | \sum_{k \leq N} f_k( \mathcal{T}_k \circ \ldots \circ \mathcal{T}_1(x))  \right | \right |_2}, \qquad x \in \T, \quad N \in \N.
\]
In particular, the findings in \cite{conze2007limit} imply a central limit theorem for sequences of the form 
\[
\frac{\sum_{k \leq N } c_k f(q^k x)}{ \left | \left | \sum_{k \leq N } c_k f(q^k x) \right | \right|_2},
\]
as it was claimed\footnote{Formally, Kac only states that such a central limit theorem can be obtained without significant changes to the arguments in his article, but does not provide a proof for that claim.} for $q = 2$ in  \cite[p.49]{Kac1946Clt}.\\
\par{}

For quite some time, the picture for arbitrary lacunary sequences $(n_k)_{k \in \N}$ remained much less clear, as it was necessary to identify requirements on $(n_k)_{k \in \N}$ to prevent anomalies like those arising in the Erd\H os-Fortet example. In his famous article, Gaposhkin \cite{G66} was able to show that, if for any fixed choice of $a,b,c \in \N $, there are only finitely many solutions (in $k$ and $\ell$) to
\begin{equation}
\label{Eq:Dio_equation_second_order}
a n_k - bn_\ell =c,
\end{equation}
then $S_N = \sum_{k \leq N} f(n_k x)$ satisfies a central limit theorem with asymptotic variance $\int_{\T} f^2(x) \,\mathrm{d}x$, where $f$ can be taken from a reasonably large class of functions. It took about 40 years until Aistleitner and Berkes were able to reduce Gaposhkin's requirements (in the anisotropic case) to a minimal level. In their remarkable article \cite{AB10}, they demonstrated that under the conditions  
\begin{equation} 
\label{eq:Dio_condition_isotropic}  
\sup_{c \in \N} \# \left\{ k, \ell \leq N : j n_k - j' n_\ell = c \right \}  + \# \left\{ k, \ell \leq N , k \neq \ell: j n_k - j' n_\ell = 0 \right\} = o_d(N), \quad 1 \leq j,j' \leq d \in \N,  
\end{equation} 
and
\begin{equation}
\label{eq:lower_bound_variance}
\left | \left| \sum_{k \leq N} f(n_k x) \right| \right|_2 \geq C \sqrt{N},
\end{equation}
for some $C> 0$, we have 
\begin{equation*}
 \frac{\sum_{k \leq N} f(n_k x)}{\sqrt{N} ||f||_2} \stackrel{w}{\longrightarrow} \mathcal{N}(0,1),  
\end{equation*}
 
whenever $f: \T \to \R$ is an arbitrary mean-zero function of bounded variation. In \cite[Theorem 1.3]{AB10}, the authors also showed that the condition \eqref{eq:Dio_condition_isotropic} is minimal in order to guarantee the existence of a central limit theorem. Diophantine conditions as the one in \eqref{eq:Dio_condition_isotropic} were also heavily investigated on the scale of the law of the iterated logarithm, see e.g. \cite{aistleitner_LiL_2010,AFP_LiL_2024}. In particular, we highlight the article \cite{weiss_lil_59} by Weiss who considered anisotropic lacunary trigonometric sums. In \cite{aistleitner_lil_2010_II}, similar results were obtained for a more general class of generating functions $f$ making progress in a question raised by Philipp in his famous article \cite{philipp1975}.
\par{}
In recent years the study of lacunary sums has been extended even further to the scale of moderate and large deviations revealing an unexpected asymptotic behavior not visible on the scales of the central limit theorem and the law of the iterated logarithm (see \cite{AGKPR_ldp_20,FJP22,PS_moderate_23}).\\
\par{}
Returning to the question of the existence of central limit theorems, we remark that in case of anisotropic sums $S_N = \sum_{k \leq N} c_k f(n_k x)$, progress appears to have been much slower and more limited, unless a particular structure such as $n_k = q^k$ is assumed. The existing isolated results primarily address the special cases where $f(x) = \cos(2 \pi x)$ (see \cite{salem_zyg_1947}) or $f(x) = \sign(\cos(2 \pi x))$. The latter is related to the famous Walsh system for which central limit theorems are obtained in, e.g., \cite{morgenthaler1957}.\\

The objective of this article is to establish central limit theorems for a broad class of anisotropic lacunary sums under suitable Diophantine conditions. In the article \cite{G71}, central limit theorems were proven for $\sum_{k \leq N} c_{k,N} f( n_k x)$, where $(c_{k,N})_{k \leq N}$ is a triangular scheme of weights satisfying an analogous condition to \eqref{eq:lindeberg_weights} and $f$ can be taken from a very large class of functions. However, the sequence $(n_k)_{k \in \N}$ needs to satisfy restrictive Diophantine conditions. 

Our main results (Theorems \ref{thm:general_clt} and \ref{thm:iidclt}) represent significant progress on the original question posed by Kac in \cite{Kac1946Clt}, as the Diophantine condition we impose in \eqref{Ass:dioph_cond_dream} can be viewed as minimal (see the discussion in Remark \ref{rem:dio_condition}).

\subsection{Main Results}

Before presenting our main results, we will introduce and discuss the assumptions required to establish Theorem \ref{thm:general_clt} and Theorem \ref{thm:iidclt}.

\begin{Ass}
\label{Ass_f}
Let $f : \T \rightarrow \R$ be an $\mathbb{L}_2(\T)$-function with
\begin{equation*}
\int_{\T} f(x) \,\mathrm{d}x = 0, \qquad \int_{\T} f^2(x) \,\mathrm{d}x =: \sigma^2 \in (0, \infty).
\end{equation*}
Writing $f(x) = \sum_{j=1}^{\infty} \left[ a_j \cos(2 \pi j x) + b_j \sin(2 \pi j x) \right]$, we shall assume that there are absolute constants $M > 0$ and $\rho > \frac{1}{2}$ such that
\[
| a_j| + |b_j| \leq \frac{M}{j^{\rho }}, \quad j \in \N.
\]
\end{Ass}

\begin{rem}
The assumption of $f: \T \rightarrow \R$ having a finite second moment is necessary in order to establish a central limit theorem. The requirement for its Fourier coefficients to decay at a certain rate is the same as in \cite{Kac1946Clt} and represents a slight improvement over  \cite{AB10}, since, in particular, every function of bounded variation satisfies the above. Moreover, any Hölder-continuous function with exponent $\rho > \frac{1}{2}$ has Fourier-coefficients tending to $0$ at speed $O \left(j^{-\rho} \right)$.
\end{rem}

Our methods are not restricted to a fixed sequence of weights $(c_k)_{k \in \mathbb{N}}$, but also accommodate more general triangular schemes $(c_{k,N})_{k \leq N}$, with $N \in \mathbb{N}$. The following assumption represents the natural generalization to the Lindeberg-type condition from \cite{Kac1946Clt} for triangular arrays of weights.

\begin{Ass}
\label{Ass:Condition_weights}
The array of weights $(c_{k,N})_{k \leq N, N \in \N}$ shall satisfy 
\begin{equation}
\sup_{N \in \N} \sup_{k \leq N} c_{k,N} \leq 1, \quad h(N) := \sum_{k \leq N} c_{k,N}^2 \longrightarrow \infty \quad \text{as} \quad N \to \infty.
\end{equation}
\end{Ass}

The results in the works \cite{aistleitner_LiL_2010,AB10,AFP_LiL_2024} suggest that it is necessary to limit the number of solutions to certain Diophantine equations in order to obtain a central limit theorem. The following assumption not only imposes a limit on the number of Diophantine solutions to equations  
\[
a n_k - b n_\ell = c,  
\]  
but also captures the subtle interplay between the weights $(c_{k,N})$ and the given lacunary sequence $(n_k)_{k \in \mathbb{N}}$. Heuristically speaking, if there are not too many \textit{inhomogeneous} (i.e. $c \neq 0$) solutions to the equation above (and some additional lower bound on the variance exists), then the clt holds with a non-degenerate variance. We remark that this bound on the inhomogeneous solutions is not satisfied in the Erd\H os-Fortet example and can be seen as the reason that the clt fails in this instance. 
If additionally the number of solutions to the \textit{homogeneous} equation (i.e. where $c=0$) is not too large, then we expect the clt to hold with asymptotic variance $\int_0^1 f^2(x)\,\mathrm{d}x$. We remark that this phenomenon was accurately described in \cite{aistleitner2013lil} on the scale of the law of the iterated logarithm. The bounds on the number of solutions in the homogeneous and inhomogeneous cases assumed in this article are as follows:

\begin{Ass}
\label{Ass:dioph_cond_dream}
For any fixed $d \in \N$, let us assume that the array of weights $(c_{k,N})_{k \leq N}, N \in \N$ and the sequence $(n_k)_{k \in \N}$ satisfy 
\begin{equation*}
L(N,d):= \sup_{c > 0} \sum_{k,\ell \leq N} \sum_{1 \leq j,j' \leq d} c_{k,N} c_{\ell,N} \mathds{1}_{[jn_k-j'n_{\ell} = c]} = o(h(N)),
\end{equation*}
and 
\begin{equation}\label{non_deg_var}\left | \left | \sum_{k \leq N}c_{k,N}f(n_kx) \right | \right|_2 \geq \delta \sqrt{h(N)}\end{equation}
for some absolute $\delta > 0$.
\end{Ass}

\begin{Ass}
\label{Ass:dioph_cond_homog}
For any $d \in \N$ fixed, we assume
\[
L^*(N,d):= L(N,d)+\sum_{ \substack{1 \leq k,\ell \leq N \\ k \neq \ell }}\sum_{1 \leq j,j' \leq d} c_{k,N} c_{\ell,N}  \mathds{1}_{[jn_k-j'n_{\ell} = 0]} = o(h(N)).
\]
\end{Ass}

With these assumptions at hand, we present the main results of our article.

\begin{thm}
\label{thm:general_clt}
Let $(n_k)_{k \in \N}$ be a lacunary sequence, let $(c_{k,N})_{k\leq N}$ be a triangular array of non-negative reals, and let $f: \T \rightarrow \R$ be such that Assumptions \ref{Ass_f}, \ref{Ass:Condition_weights} and \ref{Ass:dioph_cond_dream} are satisfied. Then we have
\[
\frac{\sum_{k \leq N}c_{k,N}f(n_kx)}{\left | \left | \sum_{k \leq N}c_{k,N}f(n_kx) \right | \right|_2} \stackrel{w}{\longrightarrow} \mathcal{N}(0,1).
\]
\end{thm}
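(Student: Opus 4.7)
The plan is to combine a Fourier truncation with a Bernstein-style big-blocks--small-blocks decomposition, and then to conclude by applying the Lindeberg--Feller theorem to an array of asymptotically independent block sums. First, using Assumption~\ref{Ass_f}, I would replace $f$ by its partial Fourier sum $p_d$ of degree $d$ and control the tail $r_d := f - p_d$. The decay $|a_j|+|b_j| = O(j^{-\rho})$ with $\rho > \tfrac{1}{2}$ gives $\|r_d\|_2 = O(d^{1/2-\rho})$, and a classical Parseval-plus-lacunarity estimate yields
\[
\Big\| \sum_{k \le N} c_{k,N}\, r_d(n_k x) \Big\|_2^2 \;\le\; C\, h(N)\, d^{\,1-2\rho}.
\]
Combined with the lower bound \eqref{non_deg_var}, this makes the tail uniformly negligible as $d\to\infty$, so by a standard diagonal approximation argument the problem reduces to proving, for each fixed $d$, the CLT for $S_N^{(d)}(x) := \sum_{k\le N} c_{k,N}\, p_d(n_k x)$ normalized by $s_N := \|S_N^{(d)}\|_2$.

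For fixed $d$ I would partition $\{1,\ldots,N\}$ into alternating big blocks $I_1,\ldots,I_m$ and small blocks $J_1,\ldots,J_{m-1}$, with $m = m(N)\to\infty$ chosen slowly enough that every big block satisfies $v_r := \sum_{k\in I_r} c_{k,N}^2 \asymp h(N)/m$, the total small-block weight is $o(h(N))$, and the index gap between successive big blocks is so large that, via the Hadamard condition \eqref{eq:Hadamard_gap}, no homogeneous equation $jn_k = j'n_\ell$ with $k \in I_r$, $\ell \in I_{r+1}$, $1\le j,j'\le d$ admits a solution. Writing $X_r := \sum_{k \in I_r} c_{k,N}\, p_d(n_k x)$ and $W_N := \sum_r X_r$, the $L^2$ distance $\|S_N^{(d)} - W_N\|_2$ is $o(\sqrt{h(N)})$, reducing the problem further to the CLT for $W_N/s_N$.

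The main step is to establish that $(X_r)_r$ behaves asymptotically as an independent family. Since each $X_r$ is a trigonometric polynomial whose frequencies lie in a narrow band determined by $I_r$, the separation property forces any mixed product $\int \prod_r X_r^{\alpha_r}\,\mathrm{d}x$ to vanish unless the underlying Diophantine equation $\sum_{r,i} m_{r,i} n_{k_{r,i}} = 0$ decouples across blocks. Quantitatively, an expansion of the characteristic function (or equivalently a moment computation) gives
\[
\E\Big[\prod_r e^{it X_r/s_N}\Big] \;=\; \prod_r \E\big[e^{it X_r/s_N}\big] \;+\; o(1),
\]
where the error is absorbed by Assumption~\ref{Ass:dioph_cond_dream}: any surviving cross-block term produces an inhomogeneous equation $jn_k - j'n_\ell = c\neq 0$ with $k,\ell$ in distinct blocks, whose total weighted count is bounded by $L(N,d) = o(h(N))$. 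The same Fourier expansion also shows $\sum_r \V(X_r) = s_N^2 + o(h(N))$, as the off-block covariances $\mathrm{Cov}(X_r,X_{r'})$ vanish by the separation property. Finally, choosing the block sizes so that $\max_r|X_r| = o(s_N)$ deterministically (via $\|p_d\|_\infty < \infty$ together with the Cauchy--Schwarz estimate $|X_r| \le \|p_d\|_\infty \sqrt{|I_r|\, v_r}$), Lindeberg's condition is trivially satisfied, and the Lindeberg--Feller theorem for the resulting independent surrogate array delivers the Gaussian limit for $W_N/s_N$.

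The principal technical difficulty lies in calibrating all parameters simultaneously: the number of blocks $m(N)$, the big-block sizes $|I_r|$, the separation $|J_r|$, and the truncation level $d$ must be tuned so that small blocks are asymptotically negligible, the big-block separation suffices to collapse cross-block Diophantine solutions onto configurations controlled by $L(N,d)$, and every estimate remains uniform in $d$ until the final limit $d\to\infty$ is taken. The asymptotic-independence step is the hardest to make rigorous, since the number of cross-block terms in the characteristic-function expansion grows combinatorially with $m$ and requires careful bookkeeping of the Diophantine constraints when evaluating the resulting oscillatory integrals.
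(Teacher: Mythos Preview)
Your outline follows the classical Bernstein block scheme, but there is a genuine gap at the two places where the paper invests most of its work.

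\medskip

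\textbf{The asymptotic-independence step.} You propose to show
$\E\bigl[\prod_r e^{itX_r/s_N}\bigr] = \prod_r \E\bigl[e^{itX_r/s_N}\bigr] + o(1)$
and then invoke Lindeberg--Feller. But Lindeberg--Feller is a theorem about \emph{independent} arrays; once you replace it by a characteristic-function argument you must actually prove the factorisation, and your buffer of length $O_d(1)$ (chosen only so that $jn_k = j'n_\ell$ has no solutions with $k,\ell$ in adjacent big blocks) is not enough. Already $X_r^2$ carries frequencies as large as $2d\,n_{\max I_r}$, so mixed terms like $\E[X_r^2 X_{r'}]$ or $\E[X_r^2 X_{r'}^2]$ need buffers of length $\log_q(2d)$, and in a characteristic-function expansion truncated at level $L$ one needs $\log_q(Ld)$. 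Since $L$ and the number of blocks $m$ both depend on $N$, the combinatorics does not close up; one is effectively forced into a genuine moment or martingale argument. The paper sidesteps this entirely by constructing a dyadic filtration $(\mathcal F_i)$, approximating the block sums by a martingale difference sequence $(Y_i)$, and applying Heyde's quantitative martingale CLT (Proposition~\ref{prop:martingale_clt}). This reduces everything to two concrete estimates, $\lVert V_M - s_M^2\rVert_2^2 = o(h(N)^2)$ and $\sum_i \E[Y_i^4] = o(h(N)^2)$, both of which are second- and fourth-order computations where the Diophantine hypothesis $L(N,d)=o(h(N))$ enters directly (Lemma~\ref{lem:est_VM_sM}, Proposition~\ref{prop:fourth_moment}).

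\medskip

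\textbf{The Lindeberg verification for anisotropic weights.} Your deterministic bound $\lvert X_r\rvert \le \lVert p_d\rVert_\infty \sqrt{\lvert I_r\rvert\, v_r}$ gives $\lvert X_r\rvert/s_N \ll \sqrt{\lvert I_r\rvert/m}$, and you need $\max_r \lvert I_r\rvert = o(m)$. This can fail for general weights. Take $c_k = 1$ for $k \le \sqrt{N}$ and $c_k = N^{-1/4}$ for $\sqrt{N} < k \le N$; then $h(N)\asymp \sqrt{N}$, a big block in the second region needs $\lvert I_r\rvert \asymp N/m$ indices to reach mass $h(N)/m$, and $\lvert I_r\rvert/m \to 0$ forces $m \gg \sqrt{N}$. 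But the buffers in the first region each carry weight $\asymp$ their length, so negligibility of total buffer mass forces $m = o(\sqrt{N})$ --- a contradiction. The paper recognises exactly this obstruction (Section~\ref{sec:heuristic}) and resolves it by first proving the CLT only for weights satisfying $N^{-\beta} \le c_k \le 1$ with $\beta\approx 1/4$ (Proposition~\ref{prop:clt_cosine_big_ck}), and then combining scales via a pigeonhole argument that locates a ``mass gap'' in the weight sizes and shows the resulting pieces are asymptotically uncorrelated (Lemma~\ref{lem:clt_cosine_general_weights}). This weight-splitting step is absent from your plan and is one of the main new ideas of the paper.
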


\begin{thm}
\label{thm:iidclt}
Let $(n_k)_{k \in \N}$ be a lacunary sequence, let $(c_{k,N})_{k\leq N}$ be a triangular array of non-negative reals, and let $f: \T \rightarrow \R$ be such that Assumptions \ref{Ass_f}, \ref{Ass:Condition_weights} and \ref{Ass:dioph_cond_homog} are satisfied. Then we have
\[
\frac{\sum_{k \leq N}c_{k,N}f(n_kx)}{|| f||_2 \sqrt{h(N)}} \stackrel{w}{\longrightarrow} \mathcal{N}(0,1).
\]
\end{thm}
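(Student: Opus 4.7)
The strategy is to deduce Theorem \ref{thm:iidclt} from Theorem \ref{thm:general_clt} via the variance asymptotic
\begin{equation*}
V_N := \bigg\| \sum_{k \leq N} c_{k,N} f(n_k x) \bigg\|_2^2 = (1+o(1))\, \|f\|_2^2\, h(N), \qquad N \to \infty.
\end{equation*}
Once this is established, Assumption \ref{Ass:dioph_cond_homog} entails Assumption \ref{Ass:dioph_cond_dream} (trivially $L(N,d) \leq L^*(N,d) = o(h(N))$, and \eqref{non_deg_var} is satisfied with $\delta = \|f\|_2/2$ for all large $N$). Hence Theorem \ref{thm:general_clt} yields the self-normalized CLT, and Slutsky's lemma combined with the variance asymptotic converts the self-normalization into the explicit normalization $\|f\|_2\sqrt{h(N)}$.

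To prove the variance asymptotic, I expand
\begin{equation*}
V_N = h(N) \|f\|_2^2 + \sum_{\substack{k,\ell \leq N \\ k \neq \ell}} c_{k,N} c_{\ell,N}\, I(k,\ell),
\end{equation*}
where by orthogonality of the trigonometric system
\begin{equation*}
I(k,\ell) := \int_{\T} f(n_k x) f(n_\ell x)\,\mathrm{d}x = \tfrac{1}{2} \sum_{\substack{j,j' \geq 1 \\ j n_k = j' n_\ell}} \big( a_j a_{j'} + b_j b_{j'} \big).
\end{equation*}
I then split $I = I^{\leq d} + I^{>d}$ at a cutoff $d$, depending on whether $\max(j,j') \leq d$ or $\max(j,j') > d$. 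The low-frequency part is directly controlled by the Diophantine hypothesis: since $|a_j a_{j'} + b_j b_{j'}| \leq 2M^2$,
\begin{equation*}
\sum_{k \neq \ell} c_{k,N} c_{\ell,N} |I^{\leq d}(k,\ell)| \leq M^2\, L^*(N,d) = o(h(N)) \qquad (N \to \infty, d \text{ fixed})
\end{equation*}
by Assumption \ref{Ass:dioph_cond_homog}.

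The key step is a uniform-in-$N$ bound on the high-frequency part that vanishes as $d \to \infty$. For $k < \ell$, writing $g = \gcd(n_k,n_\ell)$, $a = n_k/g$, $b = n_\ell/g$, the solutions of $j n_k = j' n_\ell$ are exactly $(j,j') = (bt, at)$, $t \geq 1$. Combining $b \geq n_\ell/n_k \geq q^{\ell-k}$ with $|a_j|+|b_j| \leq M j^{-\rho}$ and estimating the tail sum $\sum_{t > d/b} t^{-2\rho}$ yields the two complementary estimates
\begin{equation*}
|I(k,\ell)| \leq C_\rho\, q^{-\rho(\ell-k)}, \qquad |I^{>d}(k,\ell)| \leq C_\rho\, d^{-\alpha(\rho)},
\end{equation*}
with $\alpha(\rho) := \min(\rho, 2\rho-1) > 0$ since $\rho > 1/2$. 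Splitting the pair sum at $|\ell-k| = L(d) := \lceil \log d \rceil$, applying the first bound in the long-range regime and the second in the short-range regime, and using $2c_{k,N} c_{\ell,N} \leq c_{k,N}^2 + c_{\ell,N}^2$ to reduce both sides to $h(N)$, I obtain
\begin{equation*}
\sum_{k \neq \ell} c_{k,N} c_{\ell,N}\, |I^{>d}(k,\ell)| \leq C\, h(N) \big( q^{-\rho L(d)} + L(d)\, d^{-\alpha(\rho)} \big),
\end{equation*}
and the right-hand side tends to $0$ as $d \to \infty$ uniformly in $N$. Letting $N \to \infty$ first and then $d \to \infty$ completes the proof of the variance asymptotic.

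The main obstacle is the high-frequency estimate: since $\rho > 1/2$ only barely ensures the requisite summability, no single uniform bound on $|I^{>d}(k,\ell)|$ suffices; one really has to interpolate between a long-range bound exploiting the Hadamard gap and a short-range bound driven by the cutoff $d$. Pleasantly, the same variance asymptotic also certifies the non-degeneracy condition of Theorem \ref{thm:general_clt}, which is why no analogue of \eqref{non_deg_var} needs to appear among the hypotheses of Theorem \ref{thm:iidclt}.
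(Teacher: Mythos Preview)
Your argument is correct, and it takes a genuinely different route from the paper's. The paper develops the martingale machinery (Proposition \ref{prop:clt_cosine_big_ck}, Lemma \ref{lem:clt_cosine_general_weights}) in two parallel tracks, one under Assumption \ref{Ass:dioph_cond_dream} and one under Assumption \ref{Ass:dioph_cond_homog}, and then proves Theorem \ref{thm:iidclt} directly from the second track via the usual trigonometric-polynomial approximation. You instead observe that the whole second track is redundant once Theorem \ref{thm:general_clt} is available: the variance asymptotic $\lVert S_N\rVert_2^2 = (1+o(1))\lVert f\rVert_2^2\,h(N)$ both certifies the non-degeneracy condition \eqref{non_deg_var} and, via Slutsky, converts the self-normalisation into the explicit one. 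Your high-frequency estimate (parametrising solutions by $t$ with $(j,j')=(bt,at)$ and interpolating between the gap bound $C_\rho q^{-\rho(\ell-k)}$ and the cutoff bound $C_\rho d^{-\alpha(\rho)}$) is clean and slightly sharper in presentation than the paper's analogous tail computation for $\lVert S_N^r\rVert_2$. The low-frequency part is handled identically by Assumption \ref{Ass:dioph_cond_homog}. What your approach buys is economy: no need to rerun the block construction, the conditional-variance estimate, or the fourth-moment bound under the stronger Diophantine hypothesis. What the paper's parallel development buys is self-containment of the Assumption \ref{Ass:dioph_cond_homog} case, and it makes the equation \eqref{assum3_case} for $s_M^2$ explicit at the trigonometric-polynomial level rather than only in the limit. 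One minor caveat: your inequality $b \geq n_\ell/n_k \geq q^{\ell-k}$ uses the Hadamard condition as a uniform lower bound, whereas the hypothesis is only a $\liminf$; this is of course harmless after absorbing a constant (or replacing $q$ by any $q'\in(1,q)$), but worth stating.
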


\begin{rem}
\label{rem:dio_condition}
    \begin{itemize}
  \item 
Since Assumption \ref{Ass:Condition_weights} imposed in Theorem \ref{thm:general_clt} and Theorem \ref{thm:iidclt} is equivalent to
\[
\max_{k \leq N} \frac{ c_{k,N} }{\sqrt{\sum_{\ell \leq N}c_{\ell,N}^2}} \longrightarrow 0,
\]
our main results can be viewed as central limit theorems under a Lindeberg-Feller condition. This is clearly a necessary requirement to obtain a clt.
        \item Even without Assumptions \ref{Ass:dioph_cond_dream} or \ref{Ass:dioph_cond_homog}, it follows directly from Lemma \ref{semitriv_lem} that $|| S_N||_2 \ll ||f||_2 \sqrt{h(N)} $. On the other hand, under Assumption \ref{Ass:dioph_cond_dream}, we have 
        \[
        0 < \frac{||S_N||_2}{\sqrt{h(N)}||f||_2} \ll 1,
        \]
        where the positivity is guaranteed by \eqref{non_deg_var}. We emphasize that analogous assumptions to \eqref{non_deg_var} were also employed to obtain central limit theorems in \cite{AB10, G66,G71,Kac1946Clt}.
        
        \item Both the conditions with Diophantine solutions to \eqref{Eq:Dio_equation_second_order} for $c = 0$ as well as for $c > 0$ are in general necessary to deduce that $\lVert S_N \rVert_2 = \Vert f \rVert \sqrt{h(N)}(1 + o(1))$: In the $q$-adic isotropic setting, we obtain many Diophantine solutions for $c = 0$ which implies $\lVert S_N \rVert_2 = \sigma(q) \sqrt{h(N)}(1 + o(1))$ with $\sigma(q) \neq \Vert f \rVert_2$ in general.
        If some $c > 0$ has so many Diophantine solutions that Assumption \ref{Ass:dioph_cond_dream} fails, \cite[Theorem 1.3]{AB10} implies that
     \[
     \frac{\sum_{k \leq N} f(n_k x)}{ \lVert \sum_{k \leq N} f(n_k x) \rVert_2}
     \]
     does, in general, not converge to a normal distribution.
     \item The sharpness of the Diophantine condition in Assumption \ref{Ass:dioph_cond_dream} in the isotropic case (i.e. where $c_{k,N}=1$ for all $k \leq N$ and for all $N \in \N$) shows that our Assumptions \ref{Ass:dioph_cond_dream} resp. \ref{Ass:dioph_cond_homog} can in general not be omitted.
     Naturally, the question arises whether a counterexample to a clt can be constructed for any triangular scheme of weights. Specifically, given a sequence $(c_{k,N})_{k \leq N,N \in \N}$ satisfying Assumption \ref{Ass:Condition_weights}, are there $f$ and $(n_k)_{k \in \N}$ satisfying Assumption \ref{Ass_f} and the Hadamard gap condition such that $ \sum_{k \leq N} c_{k,N}f(n_kx)$ fails to satisfy a clt, regardless of the choice of normalization? However, such general example cannot be constructed for obvious reasons: considering the sequence $(c_1, c_2, \ldots ) = (1,0,1,0,0,1,0,0,0,1, \ldots)$, then independent of the choice of $(n_k)_{k \in \N}$, the associated lacunary sum $S_N$ satisfies 
     \[
     S_N \asymp \sum_{k \leq \log(N)} f( \tilde{n}_k x),
     \]
     where $(\tilde{n}_{k})_{k \in \N}$ is a \textit{super-lacunary} sequence (see also Corollary \ref{cor_superlac}). For the latter, the Assumption \ref{Ass:dioph_cond_dream} is satisfied (even with the number of Diophantine solutions $L(N,d)$ being uniformly bounded in $N$). Such lacunary sums always satisfy the clt with asymptotic variance $\int_{\T} f(x)^2 \,\mathrm{d}x$.
    \end{itemize}
\end{rem}

\begin{cor}
\label{Cor:Clt_CA}
In the isotropic case $c_{k,N}:=1$ for all $N \in \N$ and $k \leq N$, we recover \cite[Theorem 1.1 and Theorem 1.2]{AB10}. In this setting, Assumption \ref{Ass:dioph_cond_dream} coincides with the requirement in \cite[Theorem 1.1]{AB10}, while Assumption \ref{Ass:dioph_cond_homog} ensures that the conclusions of \cite[Theorem 1.2]{AB10} hold.
\end{cor}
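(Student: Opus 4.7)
The plan is to verify directly that in the isotropic case $c_{k,N} \equiv 1$, all three Assumptions collapse to the hypotheses used by Aistleitner and Berkes, so that Theorems \ref{thm:general_clt} and \ref{thm:iidclt} can be invoked as black boxes without any further argument.

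First I would compute $h(N) = \sum_{k \leq N} 1 = N$, which is trivially increasing to infinity, while $\sup_{k \leq N} c_{k,N} = 1$, so Assumption \ref{Ass:Condition_weights} is immediate. The lower-variance condition \eqref{non_deg_var} then reads $\left\|\sum_{k \leq N} f(n_k x)\right\|_2 \geq \delta \sqrt{N}$, which is precisely \eqref{eq:lower_bound_variance}. The sum $L(N,d)$ collapses to
\[
\sup_{c > 0} \#\{k,\ell \leq N,\, 1 \leq j,j' \leq d : jn_k - j'n_\ell = c\},
\]
which is exactly the inhomogeneous ($c > 0$) part of \eqref{eq:Dio_condition_isotropic}, and the requirement $L(N,d) = o(h(N)) = o(N)$ reproduces $o_d(N)$. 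Likewise $L^*(N,d)$ adds the homogeneous tally $\#\{k \neq \ell \leq N,\, 1 \leq j,j' \leq d : jn_k = j'n_\ell\}$, thereby reproducing \eqref{eq:Dio_condition_isotropic} in full.

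Second, I would note that the admissible class of $f$ in \cite{AB10} consists of mean-zero functions of bounded variation, whereas Assumption \ref{Ass_f} only requires Fourier-coefficient decay at rate $j^{-\rho}$ with $\rho > \tfrac{1}{2}$. Since any function of bounded variation has Fourier coefficients of order $O(1/j)$, it satisfies Assumption \ref{Ass_f} with $\rho = 1$. Hence the test-function hypothesis here is in fact strictly weaker than in \cite{AB10}, so the implication goes through without loss.

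Having matched all assumptions, the conclusion $\sum_{k \leq N} f(n_k x)/\|\sum_{k \leq N} f(n_k x)\|_2 \stackrel{w}{\to} \mathcal{N}(0,1)$ follows from Theorem \ref{thm:general_clt}, recovering \cite[Theorem 1.1]{AB10}, and the stronger conclusion with normalization $\|f\|_2 \sqrt{N}$ follows from Theorem \ref{thm:iidclt}, recovering \cite[Theorem 1.2]{AB10}. The whole argument is essentially a bookkeeping reduction; no real obstacle arises beyond confirming that the quantities $L(N,d)$ and $L^*(N,d)$ specialize term-for-term to the counting functions appearing in \eqref{eq:Dio_condition_isotropic}.
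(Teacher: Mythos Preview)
Your proposal is correct and matches the paper's treatment: the paper states this corollary without a separate proof, treating it as an immediate specialization, and your verification that $h(N)=N$, that $L(N,d)$ and $L^*(N,d)$ reduce to the counting conditions in \eqref{eq:Dio_condition_isotropic}, and that bounded-variation functions satisfy Assumption~\ref{Ass_f} with $\rho=1$ is exactly the intended bookkeeping.
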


\begin{cor}\label{cor_superlac} One of the few instances where a clt is known to hold for anisotropic lacunary sums is demonstrated in \cite{K49}. There, any super-lacunary sequence $(n_k)_{k \in \N}$ generates a lacunary sum $S_N$ for which the clt with asymptotic variance $\int_{\T} f(x)^2 \,\mathrm{d}x$ is satisfied. Theorem \ref{thm:iidclt} directly implies the findings in \cite{K49} since any super-lacunary sequence $(n_k)_{k \in \N}$ satisfies
\[
L^*(N,d) = O_d(1), \qquad d \in \N .
\]
\end{cor}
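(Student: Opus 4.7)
The plan is to prove that super-lacunarity forces $L^*(N,d) = O_d(1)$, uniformly in $N$, and then invoke Theorem \ref{thm:iidclt}. Since Assumption \ref{Ass:Condition_weights} ensures $h(N) \to \infty$, such an estimate immediately gives $L^*(N,d) = o(h(N))$, verifying Assumption \ref{Ass:dioph_cond_homog}.

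Because $0 \leq c_{k,N} \leq 1$, it suffices to bound, uniformly in $c \geq 0$, the number of quadruples $(k,\ell,j,j') \in \{1,\dots,N\}^2 \times \{1,\dots,d\}^2$ solving $j n_k - j' n_\ell = c$ (with $k \neq \ell$ imposed when $c=0$). The case $k < \ell$ is handled symmetrically to $k > \ell$, and the case $k = \ell$ (which forces $c > 0$ and $n_k = c/(j-j')$) contributes at most $d^2$ solutions in total, so from now on I focus on $k > \ell$.

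The main step exploits the rapid growth of $(n_k)$. Choose $K^* = K^*(d) \in \N$ so that $n_k/n_{k-1} > 2d$ for every $k \geq K^*$; such $K^*$ exists since $n_{k+1}/n_k \to \infty$. For every $k < K^*$ and every $(j,j')$, the value of $\ell$ is uniquely determined by $n_\ell = (j n_k - c)/j'$, accounting for at most $K^* d^2$ solutions with $k < K^*$. For $k \geq K^*$ I claim that at most one such $k$ can occur. Indeed, if $K^* \leq k_1 < k_2$ both participated in solutions with the same $c$, then, using $k_1, \ell_2 \leq k_2 - 1$,
\[
n_{k_2} \leq j_2 n_{k_2} = j_1 n_{k_1} + j_2' n_{\ell_2} - j_1' n_{\ell_1} \leq d\, n_{k_1} + d\, n_{\ell_2} \leq 2d\, n_{k_2 - 1},
\]
contradicting $n_{k_2}/n_{k_2 - 1} > 2d$. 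Hence at most $d^2$ further solutions arise from $k \geq K^*$.

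Assembling these contributions (together with their symmetric $k < \ell$ counterparts) yields $L^*(N,d) = O_d(1)$, and Theorem \ref{thm:iidclt} completes the proof. The only substantive obstacle is identifying the right quantitative form of super-lacunarity, namely the threshold $2d$ on consecutive ratios, which makes the pigeonhole-style displayed inequality rule out two distinct ``large'' indices sharing the same right-hand side $c$.
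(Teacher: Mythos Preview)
Your argument is correct. The paper does not actually supply a proof of this corollary; it simply asserts that super-lacunarity yields $L^*(N,d) = O_d(1)$ and invokes Theorem~\ref{thm:iidclt}, so your proposal fills in details the authors left implicit. Your pigeonhole step (ruling out two indices $k_1 < k_2$ above the threshold $K^*$ via the inequality $n_{k_2} \leq 2d\,n_{k_2-1}$) is exactly the right mechanism. One minor remark: the appeal to ``symmetry'' for $k < \ell$ is slightly glib, since for $c > 0$ the equation $jn_k - j'n_\ell = c$ is not symmetric; but the fix is immediate---once $\ell$ exceeds your threshold one has $j' n_\ell \geq n_\ell > d\, n_{\ell-1} \geq j n_k$, so no solutions with $k < \ell$ and large $\ell$ exist at all, giving again an $O_d(1)$ bound.
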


\subsection{Notation}
In this article we deal with random variables $S_N$, $N \in \N$ which are defined on the probability space $(\T, \mathcal{B}(\T), \lambda )$, where $\mathcal{B}$ denotes the Borel-$\sigma$-algebra, $\T$ is the univariate torus and $\lambda$ denotes the normalized Haar-measure on $\T$. Depending on the context we sometimes also use the notation $\mathbb{P}= \lambda $ and we write $\mathbb{E}[S_N]$ for the expected value
\[
\mathbb{E}[S_N]= \int_{\T } S_N(x) d \lambda(x) = \int_{\T } S_N(x) d x.
\]
We denote convergence in distribution by $\stackrel{w}{\longrightarrow}$ and convergence in probability by $\stackrel{\mathbb{P}}{\longrightarrow}$. When dealing with a deterministic sequence we will just write $\longrightarrow$ to indicate convergence. For a random variable $S_N$ on $(\T, \mathcal{B}(\T), \lambda )$ we write
\[
\left | \left| S_N \right | \right |_2 := \left( \int_{\T} S_N^2(x) \,\mathrm{d}x \right)^{1/2}.
\]
and $||S_N||_\infty = \sup_{x \in \T} | S_N(x) |$. The collection of random variables $X$ on $(\T, \mathcal{B}(\T), \lambda)$ with $||X||_2 < \infty$ is denoted by $\mathbb{L}_2(\T)$. Let $\mathcal{F} \subseteq \mathcal{B}(\T)$ be another $\sigma$-algebra, then we denote the conditional expectation of a random variable $X$ with respect to $\mathcal{F}$ by $\mathbb{E}\left[ X | \mathcal{F} \right]$.  
We denote the standard normal distribution function as $\Phi$, i.e., we have
\[
\Phi(t) = \frac{1}{\sqrt{2 \pi }} \int_{- \infty}^t e^{-\frac{x^2}{2}} \,\mathrm{d}x. 
\]
\par{}
For sequences $(a_N)_{N \in \N}$ and $(b_N)_{N \in \N}$ we write 
\[
a_N = O(b_N) \quad \text{or} \quad a_N \ll b_N,
\]
if there exists a constant $c> 0$ such that $|a_N| \leq c b_N$ for all but finitely many $N \in \N$. If 
$\lim_{N \to \infty} \frac{a_N}{b_N} = 0$, we write $a_N = o(b_N)$, and if $a_N = O(b_N)$ and $b_N = O(a_N)$, we write $a_N \asymp b_N$. Let $A \subseteq \mathbb{Z}$ be a finite set, then we denote the number of elements in $A$ by $\# A$.

\subsection{Acknowledgments}
 LF is partially supported by the DFG project \textit{Limit theorems for the volume of random projections of $\ell_p$-balls} (project number 516672205). A significant part of this work was carried out during visits of LF at NTNU Trondheim respectively MH at the University of Passau. We thank both universities for their support and hospitality. We would also like to thank Christoph Aistleitner and Joscha Prochno for various helpful comments on an earlier version of this manuscript.

\section{Proofs}

\subsection{Heuristic ideas and novelties of the proof}
\label{sec:heuristic}

The main challenge is obtaining a central limit theorem for lacunary sums $S_N = S_N(f)$ when $f$ is a trigonometric polynomial of fixed degree $d$. 
The coarse strategy to achieve this is similar to the approach in \cite{AB10} where a filtered probability space and a martingale, which approximates $S_N$ reasonably well, are constructed. Since central limit theorems for such adapted processes are well understood (see Proposition \ref{prop:martingale_clt}), this allows us to obtain a central limit theorem for $S_N$ being generated by a trigonometric polynomial of finite degree. The transition to arbitrary functions with Fourier decay as in Assumption \ref{Ass_f} then follows from standard arguments.
\par{}
In order to achieve the clt for trigonometric polynomials, one partitions the set of integers $[1,N]$ into two types of blocks. On the one hand, there are long blocks $\Delta_i$ (of polynomial length in $N$) that should contain the majority of the "mass" (in terms of the weights $c_k^2$). On the other hand, between $\Delta_i$ and $ \Delta_{i+1}$ for each $i$, there are short "buffer blocks" $\Delta_i'$ (of logarithmic length) ensuring that elements of distinct long blocks behave almost independently. This allows us to establish a clt on the long blocks which implies a clt on the entire lacunary sum, provided the contribution from the $\Delta_i'$ is negligible. \\
The latter is from the technical point of view one of the key obstructions to move from the isotropic case of \cite{AB10} to the anisotropic case considered here.
A crucial part of the arguments used in \cite{AB10} is that in the isotropic case, the "mass" of each block $\Delta_i$ is proportional to its number of elements, i.e.
\[ 
\sum_{k \leq N}1 = \sum_{k \leq N}c_k = \sum_{k \leq N}c_k^2,
\]
which is implicitly used at many positions of the proof, and most importantly in the above-mentioned buffer block construction.
In the general anisotropic case, a direct copy of this approach is doomed to fail: A buffer block needs to contain (at least) about $\log N$ many \textit{elements}, but this does not tell us anything about the \textit{mass} $\sum_{k \in \Delta_i'}c_k^2$ in comparison to $\sum_{k \leq N} c_k^2$: For arbitrary coefficients $c_k$, it might be that a positive proportion of $\sum_{k \leq N} c_k^2$ is contained in even one single buffer block, and the whole construction breaks down. To address this issue, we partition the $c_k$ into different sets according to their size, ensuring that all $c_k$ within a given set do not differ significantly from one another.
More specifically, we focus on the case where $c_k \geq N^{- \beta}$ for some fixed $ \beta > \frac{1}{4}$. Under this assumption, it is ensured that the mass of any buffer block is negligible when compared to the entire sum (which now has polynomially growing "mass" $\sum_{k \leq N}c_k^2$).
With the aid of additional refined estimates, which are required to change between the $1$-norm and the $2$-norm of the sequence $(c_{k})_{k \leq N}$ in the subsequent proofs, we establish a clt for the case where the weights $c_k$ are of similar size. This result is proved in Section \ref{sec_key_prop}.
\\
Having a central limit theorem settled for those $c_k$ with $c_k \geq N^{-\beta}$, a central limit theorem for weights $c_k$ with $ N^{-1/2} \leq c_k \leq N^{-\beta}$ can then be obtained immediately by renormalizing with $N^{-\beta}$. This gives us a marginal central limit theorem for
\begin{equation}
\label{eq:correlated_sum}
\left( \sum_{k \in A} c_k f(n_k x), \sum_{k \in B} c_k f(n_k x)  \right),
\end{equation}
where $A$ contains the indices with $c_k \geq N^{-\beta}$ and $B$ the ones with $N^{-1/2} \leq c_k \leq N^{-\beta }$. The remaining challenge lies in proving that the vector above is asymptotically uncorrelated, and thus independent. By a straight-forward pigeonhole argument, there exists a parameter $\beta \approx 1/4$ such that 
$\sum_{c_k \approx N^{-\beta}} c_k^2$ is negligibly small. Removing those $k$ from the above 
 sums does not destroy the marginal central limit theorems, but ensures a gap between the sizes of the coefficients, i.e. $\frac{\min_{k \in A} c_k}{\max_{k \in B}c_k} \longrightarrow 0$ at a certain speed. This suffices to show that the components of \eqref{eq:correlated_sum} are asymptotically uncorrelated.
The required argument is given in the proof of Lemma \ref{lem:clt_cosine_general_weights} in Section \ref{sec_final_CLT}.

\subsection{Prerequisites}

We establish the central limit theorem in Proposition \ref{prop:clt_cosine_big_ck} using methods from (discrete) martingale theory. This will need some preparation which is done in the upcoming subsections. For the moment, we will just state our main auxiliary result below.

\begin{prop}(Theorem $1$ in \cite{heyde_martingale_clt} for $\delta=1$)
  \label{prop:martingale_clt}
  \par{}
  Let $\left( X_M \right)_{M \in \N } $ with $ X_M = \sum_{i=1}^M Y_i$ be a martingale adapted to a filtration $\left(\mathcal{F}_M \right)_{M \in \N }$ such that all $Y_i$ have finite fourth moments. We set $V_M := \sum_{i=1}^M \mathbb{E} \left[ Y_i^2 | \mathcal{F}_{i-1} \right]$ and define the sequence $(s_M)_{M \in \N}$ via $s_M := \sum_{i=1}^M \mathbb{E} \left[ Y_i^2 \right]$. Then
  \[
  \sup_{t \in \R} \left| \mathbb{P} \left[ \frac{Y_1 + \ldots + Y_M}{\sqrt{s_M}} \leq t \right] - \Phi(t) \right| \ll \left(  \frac{\sum_{i=1}^M \mathbb{E} \left[ Y_i^4 \right] + \mathbb{E} \left[ (V_M -s_M )^2 \right]}{s_M^2}\right)^{1/5},
  \]
  where the implicit constant is absolute and $\Phi$ denotes the standard normal distribution function.
\end{prop}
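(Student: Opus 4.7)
Since the stated quantitative martingale CLT is Heyde's theorem, my plan is to reprove it by the classical route of Esseen's smoothing inequality combined with a characteristic function calculation. I would first normalize to $s_M = 1$ by rescaling $Y_i \mapsto Y_i/\sqrt{s_M}$, so the target becomes $\sup_t|\mathbb{P}[X_M \leq t] - \Phi(t)| \ll E^{1/5}$, where $E := \sum_i \mathbb{E}[Y_i^4] + \mathbb{E}[(V_M-1)^2]$. Esseen's smoothing inequality reduces this to estimating
\[
\int_{-T}^T \frac{|\phi_M(u) - e^{-u^2/2}|}{|u|}\,du + \frac{1}{T}
\]
for a parameter $T$ to be chosen at the end, so the task becomes a sharp comparison of the characteristic function of $X_M$ with a Gaussian on a bounded interval.

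The central step is a backwards telescoping of $\mathbb{E}[e^{iuX_M}]$ against the conditional variance process. At stage $j$ I would condition on $\mathcal{F}_{j-1}$ and compare the factor $\mathbb{E}[e^{iuY_j}\mid\mathcal{F}_{j-1}]$ with $\exp(-\tfrac{u^2}{2}\mathbb{E}[Y_j^2\mid\mathcal{F}_{j-1}])$ via Taylor expansion to fourth order. The martingale identity $\mathbb{E}[Y_j\mid\mathcal{F}_{j-1}] = 0$ kills the linear term, the quadratic terms of the two expansions agree by construction, and a careful handling (using conditional Cauchy--Schwarz to absorb the cubic moments $\mathbb{E}[Y_j^3\mid\mathcal{F}_{j-1}]$ into the fourth-moment remainder) produces a per-step error of order $u^4\mathbb{E}[Y_j^4\mid\mathcal{F}_{j-1}]$. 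Iterating from $j = M$ down to $j = 1$ and taking total expectations yields
\[
\bigl|\mathbb{E}[e^{iuX_M}] - \mathbb{E}[e^{-u^2V_M/2}]\bigr| \ll u^4 \sum_j \mathbb{E}[Y_j^4].
\]

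It then remains to compare $\mathbb{E}[e^{-u^2V_M/2}]$ with $e^{-u^2/2}$. Here I would Taylor-expand $e^{-u^2x/2}$ around $x = 1$: the zeroth-order term contributes $e^{-u^2/2}$, the first-order term vanishes in expectation because $\mathbb{E}[V_M] = s_M = 1$, and the Lagrange remainder $(u^4/8)e^{-u^2\xi/2}(V_M-1)^2$ with $\xi$ between $V_M$ and $1$ is controlled by $u^4(V_M-1)^2$, since $V_M \geq 0$ forces $\xi \geq 0$ and hence $e^{-u^2\xi/2} \leq 1$. This gives $|\mathbb{E}[e^{-u^2V_M/2}] - e^{-u^2/2}| \ll u^4\mathbb{E}[(V_M-1)^2]$. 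Combining the two estimates, $|\phi_M(u) - e^{-u^2/2}| \ll u^4 E$ on any bounded $u$-range; feeding this into Esseen yields an overall bound of the form $T^4 E + T^{-1}$, and the optimal choice $T \asymp E^{-1/5}$ produces the claimed exponent. The main technical obstacle is the telescoping step, specifically the handling of the cubic conditional moments: the Taylor expansion of $\mathbb{E}[e^{iuY_j}\mid\mathcal{F}_{j-1}]$ to fourth order has an $\mathbb{E}[Y_j^3\mid\mathcal{F}_{j-1}]$ coefficient which, unlike the linear term, is not automatically zero, and some care is required to show that its net contribution after summing over $j$ is still of size $O(u^4\sum_j\mathbb{E}[Y_j^4])$ so as not to spoil the clean $E^{1/5}$ rate.
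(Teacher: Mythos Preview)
The paper does not prove this proposition; it is quoted from Heyde's article and used as a black box, so there is no in-paper argument to compare against.

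On your sketch itself: the overall plan (Esseen smoothing, then a characteristic-function comparison first to $\mathbb{E}[e^{-u^2V_M/2}]$ and then to $e^{-u^2/2}$) is the standard route, and your second step is correct as stated. The telescoping step, however, hides a genuine difficulty you have not addressed. If you successively replace $e^{iuY_j}$ by $e^{-u^2\sigma_j^2/2}$ (with $\sigma_j^2 = \mathbb{E}[Y_j^2\mid\mathcal{F}_{j-1}]$), then at stage $j$ the hybrid carries a ``future'' factor $\prod_{k>j} e^{-u^2\sigma_k^2/2}$ which is \emph{not} $\mathcal{F}_{j-1}$-measurable, since each $\sigma_k^2$ is only $\mathcal{F}_{k-1}$-measurable. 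You therefore cannot condition on $\mathcal{F}_{j-1}$ to invoke $\mathbb{E}[Y_j\mid\mathcal{F}_{j-1}]=0$ and kill the linear Taylor term; and if instead you bound that future factor crudely by $1$ in absolute value, you lose exactly this cancellation and are left with a contribution of order $|u|\sum_j\mathbb{E}|Y_j|$, which is far too large. The published proofs (e.g.\ Hall--Heyde, \emph{Martingale Limit Theory and its Application}, Chapter~3) resolve this with a more delicate hybrid in which the control of $\sum_j\mathbb{E}[Y_j^4]$ and of $\mathbb{E}[(V_M-s_M)^2]$ is intertwined rather than carried out in two fully separate stages. The cubic-moment issue you do flag is also real and is handled there as well.
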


\begin{lem}[Lemma 2.2 in \cite{AB10}]
\label{riemann_lebesgue}
Let $f: \T \to \R$ satisfy Assumption \ref{Ass_f}. Then for any $a,b \in \T$ we have
\[
\int_{[a,b]}f(\lambda x) \,\mathrm{d}x \leq \frac{1}{\lambda}\int_{\T}\lvert f(x) \rvert \,\mathrm{d}x \leq \frac{1}{\lambda}\lVert f \rVert_{\infty}.
\]
\end{lem}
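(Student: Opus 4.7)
The strategy is elementary: perform the change of variables $u=\lambda x$ and then exploit both the mean-zero property of $f$ (part of Assumption~\ref{Ass_f}) and the $1$-periodicity of $f$ as a function on $\T$ in order to reduce an integral over a potentially long interval to one over a single fundamental domain. Since the lemma is invoked with $\lambda=n_k\in\N$, I will assume throughout that $\lambda$ is a positive integer; without this integrality the mean-zero cancellation is not available.

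I first reduce to the case $0\leq a\leq b\leq 1$, treating any wrap-around arc in $\T$ by splitting at $0$ and applying the statement to each piece. Substituting $u=\lambda x$ gives
\[
\int_{a}^{b} f(\lambda x)\,\mathrm{d}x \;=\; \frac{1}{\lambda}\int_{\lambda a}^{\lambda b} f(u)\,\mathrm{d}u,
\]
where $f$ is understood as its $1$-periodic extension to $\R$. Now I write $\lambda b-\lambda a = n+r$ with $n\in\N_0$ and $r\in[0,1)$, and I exploit that, by $1$-periodicity, the integral of $f$ over any interval of integer length equals $n\int_{\T}f(x)\,\mathrm{d}x = 0$. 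Hence
\[
\int_{\lambda a}^{\lambda b} f(u)\,\mathrm{d}u \;=\; \int_{\lambda a + n}^{\lambda b} f(u)\,\mathrm{d}u,
\]
and this remaining interval has length $r<1$, so after a final periodic shift it is contained in some interval of length at most $1$ inside a fundamental domain.

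Applying the triangle inequality and periodicity then bounds the right-hand side by $\int_{\T}|f(u)|\,\mathrm{d}u$, which yields the first inequality. The second is immediate from the trivial estimate $\int_{\T}|f(x)|\,\mathrm{d}x\leq \lVert f\rVert_{\infty}\cdot\lambda(\T)=\lVert f\rVert_{\infty}$. The only mild subtlety is keeping track of the wrap-around when $[a,b]$ is an arc crossing $0\in\T$, but this is handled by the aforementioned splitting and does not worsen the constant. Overall, there is no real obstacle: the lemma is essentially a one-line consequence of substitution plus the mean-zero cancellation encoded in Assumption~\ref{Ass_f}.
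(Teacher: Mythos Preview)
The paper does not supply its own proof of this lemma; it is simply quoted from \cite{AB10}. Your argument is the standard one and is correct in substance: substitute $u=\lambda x$, use $1$-periodicity together with the mean-zero hypothesis from Assumption~\ref{Ass_f} to annihilate all full periods, and bound the remaining interval of length $<1$ by $\int_{\T}|f|$. The restriction to integer $\lambda$ is also appropriate, since every invocation of the lemma in the paper uses an integer frequency.

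There is one small slip. Your wrap-around reduction ``split at $0$ and apply the statement to each piece'' would, if carried out literally, yield the bound $\tfrac{2}{\lambda}\int_{\T}|f|$ rather than $\tfrac{1}{\lambda}\int_{\T}|f|$, so the assertion that it ``does not worsen the constant'' is not justified by the argument as written. The clean fix is not to split at all: any arc in $\T$ lifts to a single interval $[a',b']\subset\R$ of length at most $1$, and running the change of variables once on this lift gives the stated constant directly. This is purely cosmetic, and in any case every application of the lemma in the paper is to dyadic intervals $I_{\nu,k}=[\nu 2^{-m},(\nu+1)2^{-m})$, which never wrap around.
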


\subsection{An anisotropic clt for trigonometric polynomials under weight restrictions}\label{sec_key_prop}

In this section, we focus on the case where $f = p$ is an even trigonometric polynomial of fixed degree $d \in \N$. Later we will approximate $f \in \mathbb{L}_2(\T)$ by suitably chosen trigonometric polynomials. 
The following proposition establishes a clt under the assumption that our weights $c_{k,N}$ are roughly of the same size on a logarithmic scale.

\begin{prop}
\label{prop:clt_cosine_big_ck}
Let $(n_k)_{k \in \N}$ be a lacunary sequence satisfying the Hadamard-gap condition for some $q > 1$, let $p(x) = \sum_{j=1}^d a_j \cos(2 \pi j x) $ be a mean $0$ even trigonometric polynomial of degree $d$ and let $(c_{k,N})_{k \leq N, N \in \N }$ satisfy Assumption \ref{Ass:Condition_weights}.\\
Additionally, let $\beta = \frac{1}{4} + \delta$ for some $0 < \delta \leq \frac{1}{12}$, and assume that $N^{-\beta} < c_{k,N} \leq 1$ for all $k \leq N$. Then we have the following:

\begin{itemize}
    \item[(i)]
    If Assumption \ref{Ass:dioph_cond_dream} is satisfied (with $p = f$ in \eqref{non_deg_var}), then
\[
\frac{\sum_{k \leq N}c_{k,N} p(n_kx)}{\left | \left | \sum_{k \leq N}c_{k,N}p(n_kx) \right | \right|_2} \stackrel{w}{\longrightarrow } \mathcal{N}(0,1).
\]
 \item[(ii)]
   If Assumption \ref{Ass:dioph_cond_homog} is satisfied, then 
\[
\frac{\sum_{k \leq N}c_{k,N} p(n_kx)}{\sqrt{h(N)} ||p||_2} \stackrel{w}{\longrightarrow } \mathcal{N}(0,1).
\]
\end{itemize}
\end{prop}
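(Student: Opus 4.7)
The plan is to adapt the martingale blueprint of \cite{AB10}: partition $\{1,\dots,N\}$ into alternating long blocks $\Delta_i$ and short buffer blocks $\Delta_i'$, approximate $S_N := \sum_{k\leq N} c_{k,N}p(n_kx)$ by a martingale formed from the block sums, and invoke Proposition \ref{prop:martingale_clt}. The key modification forced by the anisotropic weights is that the block decomposition must respect the \emph{mass} $\sum c_{k,N}^2$ rather than merely the number of indices. Concretely, I fix $M = \lfloor N^{\eta}\rfloor$ with $0 < \eta < 1/2 - 2\delta$, so that $M\log N = o(h(N))$; this is possible because the hypothesis $c_{k,N} > N^{-\beta}$ forces $h(N) > N^{1-2\beta} = N^{1/2 - 2\delta}$. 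A greedy procedure then yields long blocks $\Delta_1,\dots,\Delta_M$ with $m_i := \sum_{k\in\Delta_i}c_{k,N}^2 \leq h(N)/M + 1$, and between any two successive long blocks I insert a buffer block $\Delta_i'$ of $\lceil C_0\log N\rceil$ consecutive indices, where $C_0 = C_0(q,d)$ is chosen large. Because $c_{k,N}\leq 1$, the total buffer mass is $O(M\log N) = o(h(N))$, so $S_N$ agrees with $\sum_i T_i$, $T_i := \sum_{k\in\Delta_i}c_{k,N}p(n_kx)$, up to an $L^2$-error of order $o(\sqrt{h(N)})$.

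I would then take $\mathcal{F}_i$ to be the $\sigma$-algebra of dyadic intervals of $\T$ of length $\asymp 1/(dn_{\max\Delta_i})$, so that $T_i$ is $\mathcal{F}_i$-measurable, and set $Y_i := T_i - \E[T_i \mid \mathcal{F}_{i-1}]$. Because $\min\Delta_{i+1}$ is separated from $\max\Delta_i$ by at least $C_0\log N$ lacunary jumps, Hadamard's condition gives $n_{\min\Delta_{i+1}} \geq q^{C_0\log N}\,n_{\max\Delta_i} = N^{C_0\log q}\,n_{\max\Delta_i}$; applying Lemma \ref{riemann_lebesgue} termwise to $\E[p(n_kx)\mid\mathcal{F}_{i-1}]$ for $k\in\Delta_i$ then yields a pointwise bound on $\E[T_i\mid\mathcal{F}_{i-1}]$ which, once $C_0$ is taken large enough (depending on $q$), is summable to $o(\sqrt{h(N)})$. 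Thus $X_M := \sum_{i\leq M}Y_i$ is a square-integrable martingale with $\|S_N - X_M\|_2 = o(\sqrt{h(N)})$, and a CLT for $X_M$ transfers to $S_N$.

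To apply Proposition \ref{prop:martingale_clt} one must verify $\sum_i\E[Y_i^4]+\E[(V_M - s_M)^2] = o(s_M^2)$. Standard Khintchine-type fourth-moment estimates for lacunary sums (together with the conditional-expectation bound just established) give $\E[Y_i^4] \ll m_i^2$, whence $\sum_i\E[Y_i^4]\leq (\max_i m_i)\sum_i m_i \ll h(N)^2/M + h(N) = o(s_M^2)$, where $s_M \asymp h(N)$ by \eqref{non_deg_var} combined with the routine upper bound $\|S_N\|_2\ll\sqrt{h(N)}$. The estimate $\E[(V_M - s_M)^2] = o(s_M^2)$ is the technical heart, and is the step I expect to require the most effort: writing $Y_i^2 - \E[Y_i^2]$ as a Fourier expansion of the form $\sum_{k,\ell\in\Delta_i}\sum_{1\leq j,j'\leq d}c_{k,N}c_{\ell,N}\bigl(\cos(2\pi(jn_k+j'n_\ell)x) + \cos(2\pi(jn_k-j'n_\ell)x)\bigr)$, modulo diagonal terms (which assemble to $s_M$) and a conditional-expectation residual controlled as above, one expands the cross-variance across blocks and, after product-to-sum identities, reduces it to counting weighted solutions of $jn_k - j'n_\ell = c$ for $1\leq j,j'\leq 2d$ --- precisely what Assumption \ref{Ass:dioph_cond_dream} bounds via $L(N,2d) = o(h(N))$. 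Once this is done, part (i) follows directly from Proposition \ref{prop:martingale_clt}. For part (ii), I additionally expand $\|S_N\|_2^2$ in Fourier: the diagonal contribution equals $\|p\|_2^2 h(N)$, while the off-diagonal sum is dominated by $L^*(N,d) = o(h(N))$ under Assumption \ref{Ass:dioph_cond_homog}, so $\|S_N\|_2^2 = (1+o(1))\|p\|_2^2 h(N)$ and the normalization $\|S_N\|_2$ may be replaced by $\|p\|_2\sqrt{h(N)}$.
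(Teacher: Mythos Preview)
Your blueprint is essentially the paper's own: mass-based block decomposition with logarithmic buffers, dyadic filtration, martingale CLT via Proposition \ref{prop:martingale_clt}, the fourth-moment bound $\E[T_i^4]\ll m_i^2$, and the conditional-variance estimate driven by the Diophantine condition. The parametrization differs cosmetically --- the paper fixes block mass $h(N)^{\gamma}$ with $\gamma<1/2$ and lets $M\asymp h(N)^{1-\gamma}$ rather than fixing $M=N^{\eta}$ --- but the constraints (buffer mass $o(h(N))$, $\max_i m_i \cdot h(N)=o(h(N)^2)$) are the same.

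There is one concrete step that fails as written. You assert that $T_i$ is $\mathcal{F}_i$-measurable for the dyadic $\sigma$-algebra with atoms of length $\asymp 1/(dn_{\max\Delta_i})$; this is simply false, since $T_i$ is a trigonometric sum and not constant on any interval. Worse, at this scale the oscillation of $\cos(2\pi j n_k x)$ on an atom is $O(1)$ for $k$ near $\max\Delta_i$, so $\E[T_i\mid\mathcal{F}_i]$ does not approximate $T_i$ well either. The paper repairs this by refining the atoms to length $\asymp 1/(n_k h(N)^{K/2})$ and replacing $p(n_kx)$ with a piecewise-constant $\varphi_k$ satisfying $\|\varphi_k-p(n_k\cdot)\|_\infty\ll h(N)^{-K/2}$ and $\E[\varphi_k\mid\mathcal{F}_{i-1}]=0$ (Lemma \ref{lem:approx_filtration}); the martingale increments are then $Y_i=\sum_{k\in\Delta_i}c_k\varphi_k$, and Lemma \ref{lem:est_Ti_Yi} transfers all moment estimates from $T_i$ to $Y_i$. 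With the buffer length scaled accordingly (the paper uses $K\log_q h(N)$), the conditional-expectation bound you sketch via Lemma \ref{riemann_lebesgue} still goes through. This is a routine fix, but without it the martingale is not adapted and Proposition \ref{prop:martingale_clt} does not apply.

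A smaller remark: your sketch of $\E[(V_M-s_M)^2]$ collapses everything to the Diophantine count, but the paper actually needs a three-range split of the frequencies $|jn_k-j'n_{k'}|$ (its $U_i,W_i,R_i$ in Lemma \ref{lem:est_VM_sM}); only the low-frequency part is controlled by $L(N,d)$, while the mid-range requires the near-diagonal bound $\sum_{|k-k'|\leq\log_q(2d)}c_kc_{k'}\ll\sum c_k^2$ and the high-frequency part falls to Lemma \ref{riemann_lebesgue}. You flag this as the hard step, which is accurate, but be aware the reduction is not purely Diophantine.
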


The proof of Proposition \ref{prop:clt_cosine_big_ck} is the first key ingredient in order to establish Theorem \ref{thm:iidclt} and will take up several pages. In the following, we will always deal with an array of weights $(c_{k,N})_{k \leq N, N \in \N}$ satisfying Assumption \ref{Ass:Condition_weights} and we additionally assume
\[
N^{- \beta } \leq c_{k,N} \leq 1, \quad \forall k \leq N.
\]
In order to keep the notational burden low, we will omit in the following the additional index $N$, writing $c_k$ instead. 
Further, we will omit the dependence of implied constants arising on the fixed polynomial $p$ or the variable $q$ from the Hadamard condition.
We start with the construction of a filtration and an adapted process, with the notation defined below being frequently used in the subsequent auxiliary results.

\subsubsection*{Construction of the filtration and an adapted process}
\label{sec:subs_blocks}

\par{}
As sketched in Section \ref{sec:heuristic}, given a large integer $N \in \N$, we partition the interval $[1,N] \subseteq \N$ using a finite sequence of blocks $\Delta_1,\Delta_1', \Delta_2, \ldots$, where the $\Delta_i$ are ``dense'', i.e. their union contains most of the elements of $[1,N]$ while the $\Delta_i'$ act as ``buffer blocks'' making sure that for $k \in \Delta_i, \ell \in \Delta_j, i \neq j$, the random variables $f(n_kx),f(n_{\ell}x)$ behave sufficiently ``independently''. In order to obtain the desired construction, we proceed algorithmically. Let $h(N) = \sum_{k \leq N}c_k^2$ and fix $0 < \gamma < 1/2$.
For $i = 1$, we take $A_1 = 0$ and
$B_1$ such that 
\[
\sum_{A_1 \leq k \leq B_1}c_k^2 \in [h(N)^{\gamma},h(N)^{\gamma}+1].
\]
This is possible since $\sum_{k \leq N }c_k^2 = h(N) \longrightarrow \infty$ and $c_k \leq 1$. We then set $\Delta_1 := [A_1,B_1]$. Next, we choose $\Delta_1' = [A_1',B_1']$ with $A_1' := B_1+1$,  $B_1' = A_1' + \lceil K\log_q(h(N)) \rceil$ where $K > 0$ is a fixed but large constant. Now we proceed inductively, assuming that we have constructed 
\[
A_1,B_1,A_1',B_1',\ldots,A_{i-1},B_{i-1},A_{i-1}',B_{i-1}'.
\]
We choose $A_i := B_{i-1}'+1$ and $B_i$ such that
$\sum_{A_i \leq k \leq B_i}c_k^2 \in [h(N)^{\gamma},h(N)^{\gamma} +1]$, and set $\Delta_i := [A_i,B_i]$. Further, we define
$\Delta_i' := [A_i',B_i']$ where $A_i' := B_i+1$, $B_i' := A_i' + \lceil K\log_q(h(N))\rceil$. We do this until one of these blocks exceeds $N$ (for formal reasons, we set $c_k := 1, k > N$), which happens at some point $M = M(N) \in \N$ while carrying out the described construction. At this point, our algorithm terminates and we have obtained the following covering:
\[
[1,N] \subseteq \bigcup_{i=1}^M \Delta_i \cup \Delta_i',
\]
or, in other words, $N \leq \max \left\{ \Delta_M ' \right\}$.
\par{}
Having this construction set up, we highlight the following important properties: For all $i = 1, \ldots, M,$ we have

\begin{equation}
\label{eq:Properties_Delta_i}
\sum_{k \in \Delta_i}c_k^2 = h(N)^{\gamma} + O(1), \quad \sum_{k \in \Delta_i'}c_k^2 \ll_K \log h(N), \quad M=M(N) \asymp \frac{h(N)}{h(N)^{\gamma}} = h(N)^{1- \gamma/2}.
\end{equation}

Based on our lacunary sequence $(n_k)_{k \in \N}$, we will construct a filtration $\left( \mathcal{F}_{i} \right)_{i \in \N}$ and a sequence of adapted random variables $(\varphi_k)_{k \in \Delta_i}$ which will approximate $\left(  p(n_k x) \right)_{k \in \Delta_i}$ with 
\[
p(x) = \sum_{j=1}^d a_j \cos(2  \pi j x)
\]
sufficiently well.

\begin{lem}
\label{lem:approx_filtration}
Given $(\Delta_i)_{i \leq M}$ as in Subsection \ref{sec:subs_blocks}. For $k \in \Delta_i$, we define $m(k) := \lceil \log_2 n_{k} + \tfrac{K}{2}\log_2 h(N)\rceil$. Further, we set
\[
\mathcal{G}_k := \sigma\left(\left\{ \left[ \frac{v}{2^{m(k)}},\frac{v+1}{2^{m(k)}} \right): v = 0,\ldots, 2^{m(k)}-1\right\}\right), \quad \text{as well as } \quad \F_i := \mathcal{G}_{B_i}.
\]

Then for every $k \in \Delta_i$, there exists $\varphi_k: \T \to \R$ such that:

\begin{itemize}
    \item[(i)] $\varphi_k(x)$ is constant on $I_{\nu,k} := \left[\frac{\nu}{2^{m(k)}},\frac{\nu+1}{2^{m(k)}}\right)$ for any $\nu = 0,\ldots ,2^{m(k)}-1$.
    \item[(ii)] $\lVert\varphi_k - p(n_k \cdot)\rVert_{\infty} \ll \frac{1}{h(N)^{K/2}}$. 
    \item[(iii)] $\mathbb{E}[\varphi_k\vert\mathcal{F}_{i-1}] \equiv 0$.
\end{itemize}
\end{lem}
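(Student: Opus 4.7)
The natural construction is to take the dyadic conditional expectation of $p(n_k\cdot)$ and then center it with respect to $\mathcal{F}_{i-1}$. Concretely, I would set
\[
\tilde{\varphi}_k := \mathbb{E}\!\left[p(n_k\cdot)\,\big|\,\mathcal{G}_k\right] \quad \text{and} \quad \varphi_k := \tilde{\varphi}_k - \mathbb{E}\!\left[\tilde{\varphi}_k\,\big|\,\mathcal{F}_{i-1}\right].
\]
Property (iii) is then immediate from the tower property. For (i), note that $\tilde{\varphi}_k$ is by construction constant on each $I_{\nu,k}$; moreover, since $\mathcal{F}_{i-1} = \mathcal{G}_{B_{i-1}}$ is generated by dyadic intervals of length $2^{-m(B_{i-1})}$ with $m(B_{i-1}) \leq m(k)$ (because $n_{B_{i-1}} \leq n_k$), those coarser atoms are exact unions of $I_{\nu,k}$'s, so the correction term $\mathbb{E}[\tilde{\varphi}_k\mid\mathcal{F}_{i-1}]$ is also constant on every $I_{\nu,k}$.

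The heart of the argument is the sup-norm bound in (ii). I would split
\[
\|\varphi_k - p(n_k\cdot)\|_\infty \leq \|\tilde{\varphi}_k - p(n_k\cdot)\|_\infty + \left\|\mathbb{E}[\tilde{\varphi}_k \mid \mathcal{F}_{i-1}]\right\|_\infty
\]
and treat each piece separately. For the first term, on any $I_{\nu,k}$ the function $\tilde{\varphi}_k$ equals the mean of $p(n_k y)$ over $y\in I_{\nu,k}$, while $p(n_k\cdot)$ is Lipschitz with constant $O(dn_k)$. Since $|I_{\nu,k}| = 2^{-m(k)} \leq \bigl(n_k\, h(N)^{K/2}\bigr)^{-1}$, the oscillation of $p(n_k\cdot)$ on $I_{\nu,k}$, and hence $\|\tilde{\varphi}_k - p(n_k\cdot)\|_\infty$, is of order $h(N)^{-K/2}$.

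For the second term, I would first replace $\tilde{\varphi}_k$ by $p(n_k\cdot)$ inside the conditional expectation, at the cost of the already-controlled $O(h(N)^{-K/2})$ error. On each atom $J$ of $\mathcal{F}_{i-1}$ of length $|J| \asymp \bigl(n_{B_{i-1}}h(N)^{K/2}\bigr)^{-1}$, Lemma \ref{riemann_lebesgue} yields $\bigl|\int_J p(n_k y)\,dy\bigr|\ll \|p\|_\infty/n_k$, and dividing by $|J|$ gives
\[
\left\|\mathbb{E}[p(n_k\cdot)\mid\mathcal{F}_{i-1}]\right\|_\infty \ll \frac{n_{B_{i-1}}\,h(N)^{K/2}}{n_k}.
\]
The crucial input at this step is the buffer block $\Delta_{i-1}'$: its length $\lceil K\log_q h(N)\rceil$ combined with the Hadamard gap condition forces $n_k/n_{B_{i-1}}\geq q^{K\log_q h(N)} = h(N)^K$, which reduces the displayed bound to $O(h(N)^{-K/2})$, as required.

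The only subtle point is the balancing of the three scales involved: the $\mathcal{G}_k$-mesh must be fine enough that $p(n_k\cdot)$ is essentially constant on its atoms, while the coarser $\mathcal{F}_{i-1}$-mesh still has to be fine enough that the high-frequency $n_k$-oscillations of $p(n_k\cdot)$ average almost to zero on its atoms via Lemma \ref{riemann_lebesgue}. The definition $m(k) = \lceil\log_2 n_k + \tfrac{K}{2}\log_2 h(N)\rceil$ together with the logarithmic buffer-block length is precisely calibrated so that both contributions are comfortably $O(h(N)^{-K/2})$, and this is where all the quantitative content of the lemma is concentrated.
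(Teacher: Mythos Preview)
Your construction and argument coincide with the paper's: define the $\mathcal{G}_k$-conditional expectation of $p(n_k\cdot)$, subtract its $\mathcal{F}_{i-1}$-conditional expectation, bound the first error via Lipschitz continuity and the second via Lemma~\ref{riemann_lebesgue} together with the buffer-block gap $n_k/n_{B_{i-1}}\geq h(N)^K$. The paper presents exactly this route (with a minor notational slip in its definition of $\varphi_k$), so your proposal is correct and essentially identical in approach.
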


\begin{proof}
We define $\hat{\varphi}_k := \mathbb{E} \left[ p(n_k \cdot) | \mathcal{G}_k \right]$. For any $x,x' \in I_{\nu,B_{i-1}}$, the Lipschitz continuity of $p$ implies
\[
\left| p(n_k x) - p(n_k x')  \right| \ll \frac{n_k}{2^{m(k)}} \ll \frac{1}{h(N)^{K/2}}.
\]

This shows properties $(i)$ and $(ii)$ for the approximations $\hat{\varphi}_k$. For $x \in I_{\nu,k}$, we set
\[
\varphi_k(x) := \hat{\varphi}_k(x) - \frac{1}{I_{\nu,k}} \int_{I_{\nu,k}} \hat{\varphi}_k(t) dt.
\]
Then $(i)$ holds for $\varphi_k$ without further work, and by construction we have $\mathbb{E} \left[ \varphi_k | \mathcal{F}_{i-1} \right]=0$, which settles $(iii)$. In order to prove $(ii)$, we observe
\begin{align*}
|| \varphi_k - p(n_k \cdot) ||_\infty & \ll \max_{\nu=0, \ldots, 2^{m(B_{i-1})}-1} \frac{1}{|I_{\nu,k}|} \left| \int_{I_{\nu,k}} \hat{\varphi}_k(t) \mathrm{d}t \right| + \frac{1}{h(N)^{K/2}} \\
& \ll \max_{\nu=0, \ldots, 2^{m(B_{i-1})}-1} \frac{1}{|I_{\nu,k}|} \left| \int_{I_{\nu,k}} p(n_k t) \mathrm{d}t \right| + \frac{1}{h(N)^{K/2}} \\
& \ll \frac{2^{m(B_{i-1})}}{n_k} + \frac{1}{h(N)^{K/2}} \\
& \ll \frac{1}{h(N)^{K/2}},
\end{align*}
where we used Lemma \ref{riemann_lebesgue} in the second to last line and the definition of $\Delta_{i-1}'$ in the last step.
\end{proof}

Next, we use those quantities $(\varphi_k)_{k \in \Delta_i}$ to construct a suitable martingale. Let $1 \leq i \leq M=M(N)$ and define
\begin{equation}
\label{eq:Yi_Ti_VM}
Y_i(x) = \sum_{k \in \Delta_i}c_k\varphi_k(x), \quad T_i(x) = \sum_{k \in \Delta_i}c_kp(n_kx),\quad T_i'(x) = \sum_{k \in \Delta_i'}c_kp(n_kx), \quad 
V_M(x) = \sum_{i = 1}^M \mathbb{E}[Y_i^2\vert\F_{i-1}], \quad x \in \T.
\end{equation}
By construction, $(Y_i)_{i \in \N}$ is adapted to $(\mathcal{F}_i)_{i \in \N}$, and since $\mathbb{E}[Y_i | \mathcal{F}_{i-1}] = 0$, the process $(Y_i)_{i \in \N}$ is a martingale difference sequence. This means the sequence $X_M = \sum_{i=1}^M Y_i$ for $M \in \N$ is a martingale which puts us in the position to verify the other assumptions of Proposition \ref{prop:martingale_clt}.

Further, let
\begin{equation}
\label{Eq:w_i_s_M}
w_i = \int_{\T} \left(\sum_{k \in \Delta_i} c_kp(n_kx) \right)^2 \,\mathrm{d}x, \quad s_M = \left(\sum_{i =1}^M w_i\right)^{1/2}.
\end{equation}
Note that the functions $\varphi_k$ as well as the filtration $(\mathcal{F}_i)_{i \in \N}$ (and therefore also $Y_i,V_M$) depend on the value of $K$ chosen in Lemma \ref{lem:approx_filtration}. In various estimates below, we will assume implicitly that $K$ is sufficiently large. We remark that the finally chosen value of $K$ is absolute and could be determined explicitly.
With this in mind, we can prove the following estimate:

\begin{lem}
\label{lem:est_Ti_Yi}
   For any $1 \leq i \leq M$ and for any $C  > 0$, we can find a $K > 0$ such that for $Y_i = Y_i(K)$ we have
    \[
    \lVert T_i^2 - Y_i^2\rVert_{\infty} \ll \frac{1}{h(N)^C}.
    \]
\end{lem}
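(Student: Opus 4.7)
The plan is to factor $T_i^2 - Y_i^2 = (T_i - Y_i)(T_i + Y_i)$ and bound each factor separately in sup-norm. The first factor will be tiny because of Lemma \ref{lem:approx_filtration}$(ii)$, while the second factor will only be polynomially large in $N$; the key point is that under the standing assumption $N^{-\beta} < c_{k,N} \leq 1$ with $\beta = 1/4 + \delta \leq 1/3$, the quantities $N$ and $h(N)$ are polynomially comparable, so a polynomial loss in $N$ can be absorbed into a sufficiently large negative power of $h(N)$ by taking $K$ large.

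First, using Lemma \ref{lem:approx_filtration}$(ii)$ together with $c_k \leq 1$ and the trivial bound $|\Delta_i| \leq N$, the triangle inequality gives
\[
\lVert T_i - Y_i \rVert_{\infty} \leq \sum_{k \in \Delta_i} c_k \lVert p(n_k \cdot) - \varphi_k \rVert_{\infty} \ll \frac{N}{h(N)^{K/2}}.
\]
On the other hand, since $p$ is a fixed trigonometric polynomial, $\lVert p \rVert_{\infty} \ll 1$, and so
\[
\lVert T_i \rVert_{\infty} \leq \lVert p \rVert_{\infty} \sum_{k \in \Delta_i} c_k \ll N, \qquad \lVert Y_i \rVert_{\infty} \leq \lVert T_i \rVert_{\infty} + \lVert T_i - Y_i \rVert_{\infty} \ll N
\]
(the last step using that $N/h(N)^{K/2} \ll N$ for $K$ large enough, since $h(N) \to \infty$). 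Combining these two estimates,
\[
\lVert T_i^2 - Y_i^2 \rVert_{\infty} \leq \lVert T_i - Y_i \rVert_{\infty} \cdot \lVert T_i + Y_i \rVert_{\infty} \ll \frac{N^2}{h(N)^{K/2}}.
\]

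It remains to convert the factor $N^2$ into a power of $h(N)$. The lower bound $c_k > N^{-\beta}$ yields
\[
h(N) = \sum_{k \leq N} c_k^2 \geq N \cdot N^{-2\beta} = N^{1 - 2\beta},
\]
and since $\beta < 1/2$ we have $1 - 2\beta > 0$, hence $N \leq h(N)^{1/(1-2\beta)}$. Substituting this in gives
\[
\lVert T_i^2 - Y_i^2 \rVert_{\infty} \ll h(N)^{2/(1-2\beta) - K/2}.
\]
For any prescribed $C > 0$ it therefore suffices to choose $K$ so large that $K/2 - 2/(1-2\beta) \geq C$, which is possible since $\beta$ is fixed. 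This yields the claimed bound $\lVert T_i^2 - Y_i^2 \rVert_{\infty} \ll h(N)^{-C}$.

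The only step that requires any real input is the passage from $N$ to $h(N)$, and this is precisely where the assumption $c_{k,N} > N^{-\beta}$ plays its role: without a uniform lower bound on the weights, $N$ could dwarf $h(N)$ and the trivial $L^\infty$-bounds on $T_i, Y_i$ would not be recoverable. The rest is bookkeeping and depends only on $\lVert p \rVert_{\infty}$ being finite, on $c_k \leq 1$, and on the sup-norm approximation from Lemma \ref{lem:approx_filtration}.
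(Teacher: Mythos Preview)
Your proof is correct and follows essentially the same approach as the paper: factor $T_i^2 - Y_i^2 = (T_i - Y_i)(T_i + Y_i)$, bound the first factor by $N/h(N)^{K/2}$ via Lemma~\ref{lem:approx_filtration}(ii), bound the second trivially by $O(N)$, and then use $h(N) \geq N^{1-2\beta}$ to absorb the $N^2$ into a power of $h(N)$. Your write-up is in fact a bit more explicit than the paper's about the final choice of $K$.
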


\begin{proof}
    We write
    \[
    \begin{split}
    \lVert T_i^2 - Y_i^2\rVert_{\infty} &= \sup_{x \in \T}\left(\left(\sum_{k \in \Delta_i}\left(c_k(p(n_kx) - \varphi_k(x))\right)\right)\left(\sum_{k \in \Delta_i}c_k(p(n_kx) + \varphi_k(x))\right)\right)
    \\
    &\ll \# \Delta_i \sum_{k \in \Delta_i} c_k\sup_{x \in \T}\lvert p(n_kx) - \varphi_k(x)\rvert\
    \\
    &\ll \frac{\# \Delta_i^2}{h(N)^{K/2}}
    \leq \frac{N^2}{h(N)^{K/2}} \leq \frac{1}{h(N)^{C}}.
    \end{split}
    \]
     
    Here we used (ii) of Lemma \ref{lem:approx_filtration} (for a sufficiently large $K >0$) together with $h(N) \geq N^{1-2\beta}$.
\end{proof}

The following lemma will be the key ingredient in order to apply Proposition \ref{prop:martingale_clt}.
\begin{lem}
\label{lem:est_VM_sM}
   Let $(V_M)_{M \in \N}$ and $(s_M)_{M \in \N}$ be defined as in \eqref{eq:Yi_Ti_VM} and \eqref{Eq:w_i_s_M}. Then we have
    \begin{equation}
        \lVert V_M - s_M^2\rVert_2^2 = o(h(N)^2),
    \end{equation}
    where the implied constant only depends on $p$.
\end{lem}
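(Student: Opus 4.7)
The plan is to reduce the $V_M$--$s_M^2$ comparison to a Fourier-analytic estimate on each block $\Delta_i$, using the buffer blocks $\Delta_{i-1}'$ to discard the high-frequency contributions and the Diophantine hypothesis to handle the low-frequency resonances, with cross-block interactions controlled by the dyadic structure of $(\mathcal{F}_i)$. The first step is the reduction from $Y_i^2$ to $T_i^2$: Lemma~\ref{lem:est_Ti_Yi} with $C$ large yields $\lVert Y_i^2 - T_i^2\rVert_\infty \ll h(N)^{-C}$, whence also $\lVert \mathbb{E}[Y_i^2 - T_i^2\mid \mathcal{F}_{i-1}]\rVert_\infty \ll h(N)^{-C}$ and $|w_i - \mathbb{E}[Y_i^2]| \ll h(N)^{-C}$. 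Since $M \leq h(N)$, these discrepancies contribute $O(M^2 h(N)^{-2C}) = o(h(N)^2)$ to $\lVert V_M - s_M^2\rVert_2^2$, so it remains to bound $\lVert \sum_i Z_i\rVert_2^2$ with $Z_i := \mathbb{E}[T_i^2 - w_i\mid \mathcal{F}_{i-1}]$.

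Next, the product-to-sum identity gives
\[
T_i^2 - w_i = \tfrac{1}{2}\sum_{(k,\ell,j,j',\varepsilon)\in \mathcal{S}_i} c_k c_\ell a_j a_{j'}\cos(2\pi\omega x), \qquad \omega = jn_k + \varepsilon j' n_\ell,
\]
summed over non-resonant quadruples (i.e.\ $\omega \neq 0$; the exactly resonant $\varepsilon = -1$, $jn_k = j'n_\ell$ cases are absorbed into $w_i$). A direct calculation on the atoms of $\mathcal{F}_{i-1}$ (intervals of length $2^{-m(B_{i-1})}$) yields the pointwise sinc bound $\lvert \mathbb{E}[\cos(2\pi \omega \cdot)\mid \mathcal{F}_{i-1}]\rvert \leq \min\bigl(1, 2^{m(B_{i-1})}/(\pi \lvert\omega\rvert)\bigr)$, while the buffer block ensures $n_{A_i}/n_{B_{i-1}} \geq q^{|\Delta'_{i-1}|+1} \gg h(N)^K$, so $2^{m(B_{i-1})}/n_{A_i} \ll h(N)^{-K/2}$. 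Consequently, all $+$-frequencies and all $-$-frequencies with $k=\ell$ (both of size at least $n_{A_i}$) contribute negligibly; the only dangerous contributions are the $-$-frequencies $|jn_k - j'n_\ell|$ with $k\neq\ell$, for which lacunarity forces $|\ell-k| \leq \log_q d + O(1)$.

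For the diagonal contribution, $c_kc_\ell \leq \tfrac12(c_k^2 + c_\ell^2)$ combined with $\sum_{k\in \Delta_i}c_k^2 \asymp h(N)^\gamma$ and the thin-band condition $|\ell-k| \leq \log_q d + O(1)$ gives $\lVert Z_i\rVert_\infty \ll h(N)^\gamma$; therefore $\sum_i \lVert Z_i\rVert_2^2 \ll M\, h(N)^{2\gamma} \ll h(N)^{1+\gamma} = o(h(N)^2)$, since $\gamma < 1/2$.

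For the off-diagonal cross terms, for $i<j$ the tower property yields $\mathbb{E}[Z_i Z_j] = \mathbb{E}\bigl[Z_i \cdot \mathbb{E}[T_j^2 - w_j\mid\mathcal{F}_{i-1}]\bigr]$, bringing both factors to the common coarser scale $\mathcal{F}_{i-1}$. Expanding the second factor in Fourier as above, the dyadic inner products $\int \mathcal{E}_\omega^{(i)}\mathcal{E}_{\omega'}^{(i)}\,\mathrm{d}x$ vanish unless $2^{m(B_{i-1})}\mid \omega\pm\omega'$; combined with the sinc decay, only pairs with $\omega\pm\omega' = 0$ contribute non-negligibly, corresponding to resonance relations of the form $jn_k - j'''n_{\ell'} = j''n_{k'} - j'n_\ell =: c$. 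Invoking the uniform-in-$c$ bound $L(N,d) = o(h(N))$ from Assumption~\ref{Ass:dioph_cond_dream} and summing over the surviving quadruples yields $\sum_{i<j}\lvert\mathbb{E}[Z_i Z_j]\rvert = o(h(N)^2)$. I expect this combinatorial bookkeeping on the surviving resonance classes to be the main obstacle; once it is settled, combining with the diagonal estimate establishes $\lVert V_M - s_M^2\rVert_2^2 = o(h(N)^2)$.
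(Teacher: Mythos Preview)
Your initial reduction from $Y_i^2$ to $T_i^2$, the Fourier expansion of $T_i^2 - w_i$, the discarding of frequencies $\geq n_{A_i}$ via the sinc bound, and the diagonal estimate $\sum_i \lVert Z_i\rVert_2^2 \ll h(N)^{1+\gamma}$ are all correct and match the paper. The off-diagonal argument, however, has a genuine gap.

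Your claim that in $\mathbb{E}[Z_iZ_j]$ ``only pairs with $\omega\pm\omega'=0$ contribute non-negligibly'' is not justified. After discarding frequencies $\geq n_{A_i}$, the surviving $\omega$ from block $i$ still range up to $n_{A_i}$, and the buffer block gives $n_{A_i}/2^{m(B_{i-1})} \gg h(N)^{K/2}$; hence many surviving $\omega$ are \emph{large} compared to the dyadic scale $2^{m(B_{i-1})}$. For such $\omega$ the congruence $2^{m(B_{i-1})}\mid(\omega\pm\omega')$ has many non-trivial solutions, and the sinc factor $\sim 2^{m(B_{i-1})}/\omega$ is not small enough to absorb the $M^2 \asymp h(N)^{2(1-\gamma)}$ block pairs. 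So the reduction to exact resonances $\omega=\omega'$ fails in this ``medium'' frequency range, and the subsequent Diophantine bookkeeping never gets off the ground.

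The paper resolves this with a \emph{three-way} frequency split $T_i^2-w_i = U_i+W_i+R_i$, treating the two surviving pieces by entirely different mechanisms. For the very low frequencies $U_i$ (those below $h(N)^{-K}n_{B_{i-1}}$, hence $\ll 2^{m(B_{i-1})}$) one has $\mathbb{E}[U_i\mid\mathcal{F}_{i-1}]=U_i+O(N^{-1})$ pointwise, so the cross-block structure is irrelevant and one bounds $\lVert\sum_i U_i\rVert_2^2$ \emph{globally} via Parseval: writing $\sum_i U_i=\sum_u d_u\cos(2\pi u x)$, one has $\sum_u d_u^2 \leq (\max_u|d_u|)(\sum_u|d_u|)$, and the Diophantine hypothesis enters precisely as $\max_u|d_u|=o(h(N))$ while $\sum_u|d_u|\ll h(N)$ by the close-neighbour count. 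For the medium frequencies $W_i$ (those in $[h(N)^{-K}n_{B_{i-1}},n_{A_i})$) the off-diagonal is handled pairwise, but exploiting that for $i<i'$ \emph{every} frequency of $W_{i'}$ exceeds $h(N)^{-K}n_{B_{i'-1}}\gg 2^{m(B_{i-1})}$, so $\lVert\mathbb{E}[W_{i'}\mid\mathcal{F}_{i-1}]\rVert_\infty \ll h(N)^{-K}$ by Riemann--Lebesgue; no Diophantine input is needed here. Your proposal conflates the $U_i$ and $W_i$ regimes, and neither the dyadic-orthogonality argument nor the Diophantine condition can handle them simultaneously.
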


\begin{proof}
Using Lemma \ref{lem:est_Ti_Yi} we obtain
\begin{equation}
\label{eq:first_est_VM_sM}
\begin{split}
   \left \lVert V_M - s_M^2 \right \rVert_2 &= \left \lVert \sum_{i = 1}^M \E[Y_i^2|\F_{i-1}] - s_M^2 \right \rVert_2 \\
    & \leq \left \lVert \sum_{i = 1}^M \E[T_i^2|\F_{i-1}] - s_M^2 \right \rVert_2 + \frac{M}{h(N)} \\
    &= \left \lVert \sum_{i = 1}^M \E[T_i^2 - w_i|\F_{i-1}] \right \rVert_2 + \frac{M}{h(N)}.
\end{split}
\end{equation}

Since \eqref{eq:Properties_Delta_i} ensures $\frac{M}{h(N)} = o(h(N))$, it remains to examine $T_i^2 - w_i$ more closely. For any $i$, we observe
\[
\begin{split}
T_i^2(x)-w_i
    &= \left(\sum_{k \in \Delta_i}c_k\sum_{j = 1}^d a_j\cos(2\pi j n_kx)\right)^2 - w_i \\
    &\ll \mathop{ \sum  \sum }_{ \substack{ k \leq k' \in \Delta_i,  1 \leq j,j' \leq d \\
0 < \lvert jn_k - j'n_{k'}\rvert < h(N)^{-K} n_{B_{i-1}} }}
c_k c_{k'} a_j a_{j'} \cos(2 \pi ( j n_{k} - j' n_{k'})x) \\
   & \quad +\mathop{\sum \sum}_{\substack{k\leq k' \in \Delta_i, 1 \leq j,j' \leq d \\
h(N)^{-K} n_{B_{i-1}} \leq  \lvert jn_k - j'n_{k'}\rvert < n_{A_i} }} c_k c_{k'} a_j a_{j'}\cos(2 \pi ( j n_{k} - j' n_{k'})x) \\
& \quad + \mathop{ \sum \sum }_{\substack{k\leq k' \in \Delta_i, 1 \leq j,j' \leq d \\
n_{A_i} \leq \lvert jn_k - j'n_{k'}\rvert }} c_k c_{k'}a_j a_{j'} \cos(2 \pi ( j n_{k} - j' n_{k'})x) 
 \\
&=:U_i(x) + W_i(x) + R_i(x).
\end{split}
\]

This means, for the quantity in \eqref{eq:first_est_VM_sM}, we get
    \begin{align}
    \label{eq:VM_sM_split_Ui_Wi}
    \left \lVert V_M -s_M^2 \right \rVert_2 & \leq \left \lVert \sum_{i = 1}^M \E[U_i |\F_{i-1}] \right \rVert_2 +  \left \lVert \sum_{i = 1}^M \E[W_i|\F_{i-1}] \right \rVert_2 +\left \lVert \sum_{i = 1}^M \E[R_i|\F_{i-1}] \right \rVert_2 + \frac{M}{h(N)}.
    \end{align}

    Starting with the estimate for $R_i$, we see that there are at most $2 d^2 | \Delta_i|^2$ many such terms and thus we get by Lemma \ref{riemann_lebesgue}
    \begin{equation}
    \label{eq:estimate_R}
    \left \lVert \mathbb{E} \left[ R_i | \F_{i-1} \right] \right \rVert_{\infty} \leq 4d^2 | \Delta_i|^2 \frac{2^{m(B_{i-1})}}{n_{A_i}} \ll \frac{1}{N},
    \end{equation}
    where the last estimate follows from our choice of $K$ in the definition of $m(B_{i-1})$ (recall the definition of $m(k)$ in Lemma \ref{lem:approx_filtration}). Using the triangle inequality and the monotonicity of the $p$-norms, this implies 
\[
\left \lVert \sum_{i = 1}^M \E[R_i|\F_{i-1}] \right \rVert_2 \ll \frac{M}{N}.\]
    \par{}
    Next, we examine $\left\lVert \sum_{i=1}^M \E[U_i|\F_{i-1}]\right\rVert_2$.
    Since each $U_i$ is a sum of cosines with frequencies ranging from $1$ to $h(N)^{-K} n_{B_{i-1}}$, 
    we can write
    \begin{align*}
    U_i(x) = \sum_{u=1}^{\lfloor h(N)^{-K} n_{B_{i-1}} \rfloor } c_{u,i} \cos(2 \pi ux)
    \end{align*}
where
\[
c_{u,i} = \sum_{\substack{j,j' \leq d \\ k,k' \in \Delta_i}}c_kc_k'a_ja_j'\mathds{1}_{[jn_k - j'n_{k'} = u]}.
\]
Thus, expressing
\[
\sum_{i = 1}^M U_i(x) = \sum_{u = 1}^{\lfloor h(N)^{-K} n_{B_{i-1}} \rfloor} d_u \cos(2\pi u x)
\]
where $d_u = \sum_{i=1}^M c_{u,i}$, we can apply Parseval's identity to derive
\[
\left\lVert \sum_{i=1}^M U_i\right\rVert_2^2 = \sum_{u = 1}^{\lfloor h(N)^{-K} n_{B_{i-1}} \rfloor} d_u^2 \leq 
\max_{v > 0}\lvert d_v\rvert\sum_{u=1}^{\lfloor h(N)^{-K} n_{B_{i-1}} \rfloor}\lvert d_u\rvert.
\]
In the following, we first establish an estimate for $\max_{v > 0} | d_v|$ and then for the sum over all $|d_u|$. Indeed, we get
\begin{align*}
\lvert d_v \rvert & \leq \sum_{j,j' \leq d}\lvert a_ja_j'\rvert \sum_{\substack{k,k' \leq N}}c_kc_k'\mathds{1}_{[jn_k - j'n_{k'} = v]} \\
& \leq 2 || p||_2^2  \sum_{j,j' \leq d} \sum_{\substack{k,k' \leq N}}c_kc_k'\mathds{1}_{[jn_k - j'n_{k'} = v]} \\
& = o(h(N)),
\end{align*}
where we have used the elementary estimate $| a_j a_{j'} | \leq 2 ||p||_2^2$ and the Diophantine condition from Assumption \ref{Ass:dioph_cond_dream} (that is also included in Assumption \ref{Ass:dioph_cond_homog}) in the last step.
From that we infer
\begin{equation}
\label{eq:est_max_du}
\max_{v > 0} | d_v | = o( h(N)).
\end{equation}

Next, we get (using $h(N)^{-K} n_{B_{i-1}} \leq n_{A_i}$, which holds by the very definition of $A_i$ and $B_{i-1}$)

\begin{align*}
\sum_{u=1}^{\lfloor h(N)^{-K} n_{B_{i-1}} \rfloor} | d_u|
& \leq \sum_{i=1}^M \sum_{u=1}^{\lfloor h(N)^{-K} n_{B_{i-1}} \rfloor} \lvert c_{u,i}\rvert  \\
& \leq \sum_{i=1}^M \sum_{j,j' \leq d}\lvert a_ja_{j'}\rvert\sum_{\substack{k,k' \in \Delta_i \\
\lvert jn_k - j'n_{k'}\rvert \leq n_{A_i}}}c_kc_{k'} \\
& \ll \sum_{i=1}^M \sum_{j,j' \leq d} \sum_{\substack{k\leq k' \in \Delta_i \\
\lvert jn_k - j'n_{k'}\rvert \leq n_{A_i}}}c_kc_{k'}.
\end{align*}

Observe that if $k \leq k'$ then $j'n_{k'} - jn_k \geq n_{k'} - dn_k \geq n_{k}q^{k'-k} - dn_k$. Thus, for $k' \geq k + \log_q(2d)$, we have
$\lvert j'n_{k'} - jn_k\rvert \geq 2dn_k - dn_k > n_{A_i}$. Therefore it follows that
\begin{equation}
\label{eq:sum_ck_ckprime}
\sum_{\substack{k\leq k' \in \Delta_i \\
\lvert jn_k - j'n_{k'}\rvert \leq n_{A_i}}}c_kc_{k'}
\leq \sum_{k \in \Delta_i}\sum_{k \leq k' \leq k + \log_q(2d)} c_kc_{k'}
\ll \sum_{k \in \Delta_i}c_k^2.
\end{equation}
Thus,
\[
\sum_{u=1}^{\lfloor h(N)^{-K} n_{B_{i-1}} \rfloor} \lvert d_u\rvert \leq \sum_{i=1}^M \sum_{u=1}^{\lfloor h(N)^{-K} n_{B_{i-1}} \rfloor} \lvert c_{u,i}\rvert \leq 2d ||p||_\infty \sum_{i=1}^M \sum_{k \in \Delta_i}c_k^2 \ll h(N).
\]
Combining the estimate above with \eqref{eq:est_max_du} now yields
\[
\left\lVert \sum_{i=1}^M U_i\right\rVert_2^2 \leq \sum_{u = 1}^{\lfloor h(N)^{-K} n_{B_{i-1}} \rfloor} d_u^2 \leq 
\max_{u} \lvert d_u \rvert\sum_{u=1}^{\lfloor h(N)^{-K} n_{B_{i-1}} \rfloor}\lvert d_u\rvert = o_d(h(N)^2).
\]
As a byproduct of the previous argument, it follows that for any $i=1, \ldots, M$, we have $\sum_{u=1}^{\lfloor h(N)^{-K} n_{B_{i-1}} \rfloor} \lvert c_{u,i}\rvert \ll h(N) $. In order to estimate for $ \left \lVert \sum_{i=1}^M \mathbb{E} \left[ U_i | \F_{i-1} \right] \right \rVert_2^2$, we observe that the fluctuations of $U_i$ over an arbitrary atom of $\F_{i-1}$ are bounded from above by
\begin{align*}
\sum_{u=1}^{\lfloor h(N)^{-K} n_{B_{i-1}} \rfloor} | c_{u,i}| \left | 2 \pi u 2^{-m(B_{i-1})} \right| & \ll \frac{n_{B_{i-1}}}{h(N)^K} 2^{-m(B_{i-1})} \sum_{u=1}^{\lfloor h(N)^{-K} n_{B_{i-1}} \rfloor} | c_{u,i} | \\
& \ll \frac{n_{B_{i-1}}}{h(N)^{K-1} 2^{m(B_{i-1})}} \\
& \ll \frac{1}{N},
\end{align*}
where the estimate in the last line uses the definition of $m(B_{i-1})$ together with the choice of $K$. This means we have established
\begin{align}
\label{eq:est_Ui} 
\left \lVert   \sum_{i=1}^M \mathbb{E} \left[ U_i | \F_{i-1} \right] \right \rVert_{2} 
&\ll \left \lVert   \sum_{i=1}^M  U_i  \right \rVert_{2} + \frac{1}{N}= o_d( h(N)).
\end{align}
Next, we estimate $\left\lVert\sum_{i=1}^M \E[W_i|\F_{i-1}]\right\rVert_2$. We note that
\[
\left\lVert\sum_{i=1}^M \E[W_i|\F_{i-1}]\right\rVert_2^2 \leq 2\E\left[\sum_{1 \leq i,i'\leq M} \E[W_i|\F_{i-1}]\E[W_{i'}|\F_{i'-1}]\right].
\]

The diagonal terms where $i=i'$ are equal to
\[
\E\left[\sum_{i = 1}^M \E[W_i|\F_{i-1}]^2\right].
\]
Since $W_i(x) = \sum_{u=\lfloor h(N)^{-K} n_{B_{i-1}} \rfloor +1 }^{ n_{A_i}}c_u\cos(2\pi u x)$, an analogous calculation as in \eqref{eq:sum_ck_ckprime}, combined with Jensen's inequality for the conditional expectation, reveals that
\begin{equation}
\label{eq:est_Wi_diag}
\begin{split}
\E\left[\sum_{i=1}^M \E[W_i|\F_{i-1}]^2\right] & \leq \sum_{i=1}^M\lVert W_i \rVert_{2}^2 \\
& \ll \sum_{i = 1}^M\left( \sum_{k \in \Delta_i}c_k^2 \right)^2 \\
& \ll h(N)^{\gamma} \sum_{i=1}^M \sum_{k \in \Delta_i} c_k^2
\\
& \ll h(N)^{3/2},
\end{split}
\end{equation}
where we have used \eqref{eq:Properties_Delta_i} in the second to last line. So we are left to treat the case where $i < i'$. Since $\E[W_i|\F_{i-1}]$ is $\F_{i-1}$-measurable, we get

\[
\begin{split}
\left\lvert \E\left[\E[W_i|\F_{i-1}]\E[W_{i'}|\F_{{i'}-1}] \big\vert \F_{i-1}\right]\right\rvert
& = \left\lvert\E[W_i|\F_{i-1}]\E[W_{i'} \big\vert \F_{i-1}]\right\rvert \\
& \leq \lVert W_i\rVert_{\infty}\left\lvert\E[W_{i'} \big\vert \F_{i-1}]\right\rvert \\
& \leq \left( \sum_{k \in \Delta_i}c_k^2 \right) \left\lvert\E[W_{i'} \big\vert \F_{i-1}]\right\rvert \\
& \ll \sqrt{h(N)}\left\lvert\E[W_{i'} \big\vert \F_{i-1}]\right\rvert.
\end{split}
\]
Again, the second to last inequality utilizes an analogous argument as in \eqref{eq:sum_ck_ckprime} whereas the last estimate uses \eqref{eq:Properties_Delta_i}.
Let $W_{i'}(x) = \sum_{u=\lfloor h(N)^{-K} n_{B_{i'-1}} \rfloor +1 }^{ n_{A_{i'}}}c_u\cos(2\pi u x)$ and recall that
\[
\sum_{u=\lfloor h(N)^{-K} n_{B_{i'-1}} \rfloor +1 }^{ n_{A_{i'}}} \lvert c_u\rvert \ll \sum_{k \in \Delta_{i'}}c_k^2 \ll \sqrt{h(N)}.
\]

By Lemma \ref{riemann_lebesgue} for $f(x) = \cos(2\pi x)$ together with the estimate above, we obtain

\begin{equation}
\label{eq:est_Wi_offdiag}
\begin{split} 
\left\lvert\E[W_{i'} \big\vert \F_{i-1}]\right\rvert 
& \leq \sum_{u=\lfloor h(N)^{-K} n_{B_{i'-1}} \rfloor +1 }^{ n_{A_{i'}}} \lvert c_u \rvert u^{-1}2^{m(B_{i-1})} \\
& \leq \frac{2^{m(B_{i-1})}}{h(N)^{-K} n_{B_{i'-1}} } \sum_u \lvert c_u\rvert \\
& \ll \frac{2^{m(B_{i-1})}}{h(N)^{-K} n_{B_{i'-1}} } h(N) \\
& \leq h(N)^{K+1} \frac{n_{B_{i-1}}}{n_{B_{i'-1}}} \\
& \ll \frac{1}{h(N)^{K}}.
\end{split}
\end{equation}

In the second-to-last step, we applied the definition of ${m(B_{i-1})}$  and completed our estimate using $i' > i$, which implies $\frac{n_{B_{i-1}}}{n_{B_{i'-1}}} \leq q^{-h(N)^{\gamma}}$ which goes faster to zero than any polynomial power in $h(N)$ can grow.

Combining \eqref{eq:est_Wi_diag} and \eqref{eq:est_Wi_offdiag} we obtain
\begin{align*}
\left\lVert\sum_{i=1}^M \E[W_i|\F_{i-1}]\right\rVert_2^2 & \leq  \E\left[\sum_{1 \leq i\leq M} \E[W_i|\F_{i-1}]^2\right] + 2\E\left[\sum_{1 \leq i<i'\leq M} \E[W_i|\F_{i-1}]\E[W_{i'}|\F_{i'-1}]\right] \\
& \ll h(N)^{3/2} + \sum_{1 \leq i<i'\leq M} \frac{1}{h(N)^K} \\
& \ll h(N)^{3/2},
\end{align*}
where we used \eqref{eq:Properties_Delta_i}, $\gamma < 1/2$ and that $K > 0$ is sufficiently large.
This finishes our analysis of the terms $(W_i)_{i=1}^M$.
\par{}
Aggregating the previous estimate on the $W_i$ and \eqref{eq:est_Ui}, for the expression in \eqref{eq:VM_sM_split_Ui_Wi}, we finally end up with
\[\lVert V_M - s_M^2\rVert_2^2 = o(h(N)^2) + h(N)^{3/2},\]
which proves the claim.

 \end{proof}

 The following lemma is used several times in the sequel.

 \begin{lem}
 \label{semitriv_lem}
 Let $1 \leq j,j' \leq d$, then we have
 \[
\sup_{c \in \N } \sum_{k,k' \in \Delta_i} c_k c_{k'} \mathds{1}_{[jn_k - j'n_{k'} = c]} \leq \sum_{k \in \Delta_i}c_k^2.
 \]
 \end{lem}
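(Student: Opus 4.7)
The plan is a short injectivity-plus-AM-GM argument. First I would exploit the Hadamard gap condition \eqref{eq:Hadamard_gap}: since $n_{k+1}/n_k \geq q > 1$, the sequence $(n_k)_{k \in \N}$ is strictly increasing, in particular injective. Fix $c \in \N$ and fix $1 \leq j, j' \leq d$. For a given $k$, the equation $jn_k - j'n_{k'} = c$ pins down $n_{k'} = (jn_k - c)/j'$, so by injectivity it determines at most one index $k'$. By the same reasoning, for given $k'$ there is at most one $k$ solving the equation. Hence the set
\[
S := \{(k,k') \in \Delta_i \times \Delta_i : jn_k - j'n_{k'} = c\}
\]
is the graph of a partial injection $\phi : \Delta_i \to \Delta_i$ (restricted to the $k$ where $\phi(k)$ is defined and lies in $\Delta_i$).

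Next I would use AM-GM, $c_k c_{\phi(k)} \leq \tfrac{1}{2}(c_k^2 + c_{\phi(k)}^2)$, and sum over all $k$ for which $\phi(k)$ is defined and lies in $\Delta_i$:
\[
\sum_{k,k' \in \Delta_i} c_k c_{k'} \mathds{1}_{[jn_k - j'n_{k'} = c]} = \sum_{k : \phi(k) \in \Delta_i} c_k c_{\phi(k)} \leq \tfrac{1}{2}\sum_{k \in \Delta_i} c_k^2 + \tfrac{1}{2}\sum_{k : \phi(k) \in \Delta_i} c_{\phi(k)}^2.
\]
The injectivity of $\phi$ lets me change variables $k' = \phi(k)$ in the last sum, bounding it by $\sum_{k' \in \Delta_i} c_{k'}^2$. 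Both pieces are then equal to $\sum_{k \in \Delta_i} c_k^2$, which gives the claimed bound. The estimate is uniform in $c$, so taking the supremum causes no trouble.

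There is no real obstacle here; the only thing worth being slightly careful about is not implicitly ruling out the case where $c$ is such that no solution exists (in which case the left-hand side is $0$ and the inequality is trivial), and observing that even if $\N$ were allowed to include $0$, the same injectivity argument handles $jn_k = j'n_{k'}$ with possibly $j \neq j'$.
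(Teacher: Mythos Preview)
Your proof is correct and is essentially the same argument as the paper's: both fix $c$, observe that the solution set is the graph of a partial injection on $\Delta_i$, apply AM--GM termwise, and use injectivity to bound the resulting sums by $\sum_{k \in \Delta_i} c_k^2$. The only cosmetic difference is that the paper names the domain set $A_i$ explicitly, whereas you phrase things via the partial injection $\phi$.
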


\begin{proof}
For fixed $1 \leq j,j' \leq d$, $c \in \N$ and for fixed $1 \leq k \leq N$, there is at most one $k'$ such that $j n_k - j' n_{k'} = c$ and clearly this $k'$ is distinct for different $k$. Thus this defines an injection $k' = k'(c,j,j')$ from 
$A_i = A_i(c,j,j') := \{k \in \Delta_i: \exists k' \in \Delta_i: j n_k - j' n_{k'} = c\}$ to $\Delta_i$.
We obtain
\begin{align*}
 \sum_{k,k' \in \Delta_i} c_k c_{k'} \mathds{1}_{[jn_k - j'n_{k'} = c]} & = 
\sum_{k \in A_i} \underbrace{c_k c_{k'(k)}}_{\leq \frac{1}{2}(c_k^2 + c_{k'(k)}^2)} \underbrace{\mathds{1}_{[jn_k - j'n_{k'(k)} = c]}}_{\leq 1} \\
& \leq \sum_{k \in \Delta_i} c_{k}^2.
\end{align*}
Since the estimate is uniform in $c \in \N$, the claim follows.
\end{proof}

Proposition \ref{prop:martingale_clt} requires a bound on $\sum_{i=1}^M \mathbb{E} \left[ Y_i^4 \right]$. This is accomplished in the following axillary result. We mention that similar estimates have been obtained in \cite{aistleitner_lil_2010_II} and \cite[Lemma 2.2]{BF_1979_invariance} under slightly more restrictive assumptions.

 \begin{prop}
 \label{prop:fourth_moment}
 Let $(Y_i)_{i=1,\ldots, M}$ be the sequence of random variables constructed at the beginning of Section \ref{sec:subs_blocks}. Then there exists $\varepsilon > 0$ such that
\[
\sum_{i = 1}^M \E[Y_i^4] \ll h(N)^{2-\varepsilon},
\]
where the implied constant only depends on $p$.
\end{prop}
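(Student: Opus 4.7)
The approach has two parts. First, I would replace $Y_i$ by the unapproximated trigonometric polynomial $T_i := \sum_{k \in \Delta_i} c_k p(n_k x)$ at negligible cost, and then establish a Khintchine--Zygmund-type bound $\mathbb{E}[T_i^4] \ll \bigl(\sum_{k \in \Delta_i} c_k^2\bigr)^2 \ll h(N)^{2\gamma}$ via the standard four-fold expansion for lacunary trigonometric sums. Summation over $i$, combined with $M \asymp h(N)^{1-\gamma}$ from \eqref{eq:Properties_Delta_i} and the constraint $\gamma < 1/2$ from the block construction, then yields the claimed bound.

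For Step 1 (reduction $Y_i \leadsto T_i$), Lemma \ref{lem:approx_filtration}(ii) gives $\lVert \varphi_k - p(n_k \cdot) \rVert_\infty \ll h(N)^{-K/2}$, hence $\lVert Y_i - T_i \rVert_\infty \leq \lvert \Delta_i \rvert \cdot h(N)^{-K/2} \leq N \cdot h(N)^{-K/2}$. The assumption $c_k \geq N^{-\beta}$ forces $h(N) \geq N^{1-2\beta}$, so $N \leq h(N)^{1/(1-2\beta)}$ is polynomial in $h(N)$; choosing $K$ large enough makes $\lVert Y_i - T_i \rVert_\infty$ smaller than $h(N)^{-A}$ for any desired $A > 0$. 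A standard application of Minkowski's inequality in $L^4$ then gives $\lvert \mathbb{E}[Y_i^4] - \mathbb{E}[T_i^4] \rvert \ll h(N)^{-A}$, so it suffices to bound $\sum_{i} \mathbb{E}[T_i^4]$.

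For Step 2 (the $L^4$-estimate), I would decompose $T_i = \sum_{j = 1}^d a_j S_{i,j}$ with $S_{i,j}(x) := \sum_{k \in \Delta_i} c_k \cos(2\pi j n_k x)$, and treat each $S_{i,j}$ separately. Since $(j n_k)_{k \in \N}$ is Hadamard-lacunary with the same ratio $q$ for every fixed $j$, expanding $\mathbb{E}[S_{i,j}^4]$ and using orthogonality of cosines leads to
\[
\mathbb{E}[S_{i,j}^4] \ll \sum_{k_1, \ldots, k_4 \in \Delta_i} c_{k_1} \cdots c_{k_4} \cdot \# \bigl\{ \varepsilon \in \{\pm 1\}^4 : \textstyle \sum_{\ell = 1}^{4} \varepsilon_\ell n_{k_\ell} = 0 \bigr\}.
\]
Ordering $k_{(1)} \leq \ldots \leq k_{(4)}$, the Hadamard gap condition forces $k_{(4)} - k_{(3)} \leq \lceil \log_q 3 \rceil$, and iterating the same argument on the reduced equation shows that any solution either features a doubly-paired index structure ($k_{(1)} = k_{(2)}$, $k_{(3)} = k_{(4)}$) or is confined to a bounded-range configuration. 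In either case an application of AM--GM to the bounded-range pairs bounds the weighted count by $\bigl(\sum_{k \in \Delta_i} c_k^2\bigr)^2$, independently of the Diophantine condition. Minkowski's inequality then gives $\lVert T_i \rVert_4 \leq \sum_{j = 1}^d \lvert a_j \rvert \lVert S_{i,j} \rVert_4 \ll_{p,q,d} \bigl(\sum_{k \in \Delta_i} c_k^2\bigr)^{1/2} \ll h(N)^{\gamma/2}$, i.e.\ $\mathbb{E}[T_i^4] \ll h(N)^{2\gamma}$.

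Finally, summing over $i$ using \eqref{eq:Properties_Delta_i} yields $\sum_i \mathbb{E}[Y_i^4] \ll M \cdot h(N)^{2\gamma} \asymp h(N)^{1+\gamma}$. Since the block construction requires $\gamma < 1/2$, this is bounded by $h(N)^{3/2} = h(N)^{2 - 1/2}$, proving the proposition with $\varepsilon = 1 - \gamma > 1/2$. The main technical obstacle is the four-tuple case analysis in Step 2: distinguishing the doubly-paired contributions from the remaining bounded-range configurations and handling the ``near-collisions'' arising when $1 < q \leq 3$ (so that $\lceil \log_q 3 \rceil \geq 1$) requires tracking the pairing structure precisely; this is the weighted analogue of the estimates carried out in \cite{aistleitner_lil_2010_II} and \cite[Lemma 2.2]{BF_1979_invariance}.
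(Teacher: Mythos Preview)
Your proposal is correct and follows essentially the same route as the paper: reduce $Y_i$ to $T_i$ at negligible cost, bound $\mathbb{E}[T_i^4]$ by $\bigl(\sum_{k\in\Delta_i}c_k^2\bigr)^2\ll h(N)^{2\gamma}$ via the lacunary four-tuple expansion, and sum over the $M\asymp h(N)^{1-\gamma}$ blocks to obtain $h(N)^{1+\gamma}$. The only difference is cosmetic: the paper expands $T_i^4$ directly with mixed frequencies $j_1,\ldots,j_4$ (yielding the constraint $k_{(3)}\geq k_{(4)}-\lceil\log_q(3d)\rceil$) rather than first splitting by $j$ via Minkowski, and it disposes of the four-tuple count by a single injection argument---for fixed $(k_3,k_4)$ each $k_2$ determines at most one $k_1$, then AM--GM---rather than your ``doubly-paired versus bounded-range'' dichotomy, which is not quite a complete classification (opposite signs on $n_{k_{(3)}},n_{k_{(4)}}$ allow $k_{(2)}\ll k_{(3)}$) but is repaired by exactly that injection.
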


 \begin{proof}
 Since 
 \[
 \E[Y_i^4] = \E[Y_i^4 -T_i^4] + \E[T_i^4],
 \]
 we can use Lemma \ref{lem:est_Ti_Yi} to obtain
 \[
 \begin{split}
 \E[Y_i^4 -T_i^4] \leq \lVert Y_i^2 -T_i^2\rVert_{\infty}\left(\lVert T_i^2\rVert_{\infty} + \lVert Y_i^2\rVert_{\infty}\right)
 \ll \frac{N}{h(N)^C}.
 \end{split}
 \]

We remind ourselves that $C > 0$ can be chosen sufficiently large and by \eqref{eq:Properties_Delta_i}, we have $h(N) \geq N^{1 - 2\beta}$. This means the term in the previous display is negligibly small. So we are left to provide an upper bound for $\E[T_i^4]$. We recall that
\[
T_i(x) = \sum_{k \in \Delta_i} c_k p( n_k x),
\]
where $ p(n_k x) = \sum_{j=1}^d a_j \cos(2 \pi j n_k x) $. This means if we take the $4$-th power of $T_i$ and we use trigonometric identities such as $\cos( \alpha + \beta) = \frac{1}{2} \left( \cos(\alpha - \beta)  +  \cos(\alpha + \beta) \right)$, we end up with a large sum over cosines with respective frequencies of the form $\pm j_1 n_{k_1} \pm \ldots \pm j_4 n_{k_4}$, where $1 \leq j_\ell \leq d $ and $k_\ell \in \Delta_i$.
Since we are only interested in $\E \left[ T_i^4 \right]$, solely those frequencies $\pm j_1 n_{k_1} \pm \ldots \pm j_4 n_{k_4}$ remain which are $0$. This observation reveals
\[
\begin{split}
\E[T_i^4] &\leq \sum_{k_1,k_2,k_3,k_4 \in \Delta_i}c_{k_1}c_{k_2}c_{k_3}c_{k_4}
\sum_{1 \leq j_1,j_2,j_3,j_4 \leq d}\vert a_{j_1}a_{j_2}a_{j_3}a_{j_4}\vert\mathds{1}_{[ \pm j_1n_{k_1}\pm j_2n_{k_2}\pm j_3n_{k_3}\pm j_4n_{k_4} = 0]} \\
&\ll \sum_{1 \leq j_1,j_2,j_3,j_4 \leq d}\vert a_{j_1}a_{j_2}a_{j_3}a_{j_4}\vert \sum_{k_1\leq k_2 \leq k_3 \leq k_4 \in \Delta_i}c_{k_1}c_{k_2}c_{k_3}c_{k_4}\mathds{1}_{[\pm j_1n_{k_1}\pm j_2n_{k_2}\pm j_3n_{k_3}\pm j_4n_{k_4} = 0]}.
\end{split}
\]

An expression of the form $\pm j_1n_{k_1}\pm j_2n_{k_2}\pm j_3n_{k_3}\pm j_4n_{k_4}   $ can only then be $0$, if at least one sign in front of the $j_i$ is positive;
without loss of generality let it be the sign in front of $j_4$. Thus we get $0 =j_4n_{k_4} \pm j_1n_{k_1}\pm j_2n_{k_2}\pm j_3n_{k_3} \geq n_{k_4} - 3dn_{k_3}$. This implies
$n_{k_3} \geq \frac{n_{k_4}}{3d}$ and thus by the Hadamard-gap condition we have $k_3 \geq k_4 - \lceil \log_q(3d)\rceil$. Furthermore, for fixed 
$(k_3,k_4)$, for every $k_2$, there exists at most one $k_1$ such that the equation $\pm j_1n_{k_1}\pm j_2n_{k_2}\pm j_3n_{k_3}\pm j_4n_{k_4} = 0$ is satisfied, and such a $k_1$ is distinct for different $k_2$. This allows us to construct an injection $ k_2 \mapsto {k_1}_{k_3,k_4}(k_2)$ which further implies 

\[
\begin{split}
\sum_{k_1\leq k_2 \leq k_3 \leq k_4 \in \Delta_i}c_{k_1}c_{k_2}c_{k_3}c_{k_4}\mathds{1}_{[\pm j_1n_{k_1}\pm j_2n_{k_2}\pm j_3n_{k_3}\pm j_4n_{k_4} = 0]}
&\leq \sum_{k_4 \in \Delta_i}c_{k_4}\sum_{k_4 - \lceil \log_q(3d)\rfloor \leq k_3 \leq k_4}c_{k_3}\sum_{k_2 \leq k_3}c_{k_2}
c_{{k_1}_{k_3,k_4}(k_2)} \\
&\ll \sum_{k_4 \in \Delta_i}c_{k_4}\sum_{k_4 - \lceil \log_q(3d)\rfloor \leq k_3 \leq k_4}c_{k_3}\sum_{k_2 \in \Delta_i}c_{k_2}^2 \\
&\ll_d \left(\sum_{k \in \Delta_i}c_k^2\right)^2 \\
& = h(N)^{2\gamma}.
\end{split}
\]

Summing over $M$ and using $ M \asymp h(N)^{1-\gamma}$ (see \eqref{eq:Properties_Delta_i}), we obtain 
\[
\sum_{i = 1}^M \E[Y_i^4] \ll M h(N)^{2 \gamma } \asymp  h(N)^{1+ \gamma}.
\]
Since $\gamma < 1$, this finishes the proof.
 \end{proof}

 We have now gathered all auxiliary results in order to finally prove Proposition \ref{prop:clt_cosine_big_ck}.

\begin{proof}[Proof of Proposition \ref{prop:clt_cosine_big_ck}]
We provide a complete proof of Proposition \ref{prop:clt_cosine_big_ck} under Assumption \ref{Ass:dioph_cond_dream}. The corresponding claim under Assumption \ref{Ass:dioph_cond_homog} follows mutatis mutandis.
Let $N \in \N$ be given and choose $M=M(N)$ such that $N \in \Delta_M \cup \Delta_M'$. Using the quantities from \eqref{eq:Yi_Ti_VM}, we may write
\[
\sum_{k =1}^{N}c_k p(n_kx) = \sum_{i =1}^M Y_i + \sum_{i=1}^M (T_i-Y_i) + \sum_{i=1}^{M}T_i' - \sum_{k = N+1}^{\hat{N}} c_kp(n_kx),
\]
where $\hat{N} = \max  \Delta_M' $. We shall now establish two properties. On the one hand, we show that under Assumption \ref{Ass:dioph_cond_dream}, the $2$-norms of the quantities 
\[
\sum_{i=1}^M (T_i- Y_i), \quad  \sum_{i=1}^M T_i', \quad \text{and} \quad \sum_{k=N+1}^{\hat{N}} c_k p(n_kx)
\]
are all of order $o(|| S_N||_2)$. On the other hand, we identify an asymptotically precise formula for $s_M$ as $N \longrightarrow \infty$, which will imply in particular that $s_M $ and $||S_N||_2$ are asymptotically equivalent.

Starting with the latter, we note that for fixed $d \in \N$, $i,i' \in \N$ sufficiently large and $k \in \Delta_i, k' \in \Delta_i', i \neq i'$, we have
for all $1 \leq j,j' \leq d$ that $jn_k - j'n_{k'} \neq 0$. Hence, for sufficiently large $N \in \N$ and fixed $1 \leq j,j'\leq d$, we obtain
\[
\begin{split}
\sum_{i=1}^M \sum_{k,k' \in \Delta_i} c_k c_{k'} \mathbb{1}_{[j n_k - j' n_{k'} = 0]} 
=&
\sum_{\substack{k,k' \in \bigcup\limits_{1 \leq i \leq M}\Delta_i}} c_k c_{k'} \mathbb{1}_{[j n_k - j' n_{k'} = 0]} 
\\=& \sum_{k,k' \leq N} c_k c_{k'} \mathbb{1}_{[j n_k - j' n_{k'} = 0]} - 
\sum_{k \in \bigcup_{1 \leq i \leq M}\Delta_i'}\sum_{k' \neq k}c_k c_{k'} \mathbb{1}_{[j n_k - j' n_{k'} = 0]}
\\& \quad + \sum_{k \in \Delta_M, k > N}\sum_{k' \neq k}c_k c_{k'} \mathbb{1}_{[j n_k - j' n_{k'} = 0]}.
\end{split}
\]
Since $c_k \leq 1$ and for fixed $j,j',k$, there is at most one $k'$ satisfying $j n_k - j' n_{k'} = 0$, we obtain
\[
\begin{split}
\sum_{k \in \bigcup_{1 \leq i \leq M}\Delta_i'}\sum_{k' \neq k}c_k c_{k'} \mathbb{1}_{[j n_k - j' n_{k'}=0]} + 
\sum_{k \in \Delta_M, k > N}\sum_{k' \neq k}c_k c_{k'} \mathbb{1}_{[j n_k - j' n_{k'} = 0]} 
&\ll \sum_{1 \leq i \leq M}|\Delta_i'| + |\Delta_M| \\
&= O(M \log N + h(N)^{\gamma}) = o(h(N)),
\end{split}
\]
where we have used \eqref{eq:Properties_Delta_i} and the property that each buffer block $\Delta_i'$ has at most logarithmic length. Thus, under Assumption \ref{Ass:dioph_cond_dream}, we have
\begin{equation}
\label{assum3_case}
\begin{split}
s_M^2 & = \sum_{i=1}^M \int_{\T} \left( \sum_{k \in \Delta_i} c_k p(n_k x) \right)^2 \,\mathrm{d}x \\
& = \sum_{i=1}^M \sum_{k, k' \in \Delta_i} c_k c_{k'} \sum_{j,j' \leq d} \frac{ a_j a_{j'}}{2} \mathbb{1}_{[j n_k - j' n_{k'} = 0]} \\
& = \sum_{j,j' \leq d} \frac{a_ja_{j'}}{2}\sum_{i=1}^M \sum_{k,k' \in \Delta_i} c_k c_{k'} \mathbb{1}_{[j n_k - j' n_{k'} = 0]} + o(h(N))
\\&= \int_{\T} \left( \sum_{k \leq N} c_k p(n_k x) \right)^2 \,\mathrm{d}x + o(h(N))
\\&= \left | \left | \sum_{k \leq N}c_{k}p(n_kx) \right | \right|_2^2(1 + o(1)).
\end{split}
\end{equation}

where we used the estimate \eqref{non_deg_var} from Assumption \ref{Ass:dioph_cond_dream} in the last line.
This shows that $s_M$ and $||S_N||_2$ are asymptotically equivalent as $N \rightarrow \infty$. Next, we note that
\begin{align*}
\left \lVert \sum_{k=N+1}^{\hat{N}} c_k p(n_k x) \right \rVert_2 & \ll \left(\sum_{k \in \Delta_M \cup \Delta_M'} c_k^2\right)^{1/2}  \ll h(N)^{\gamma/2} = o\left(\sqrt{h(N)} \right).
\end{align*}

We get using Parseval's identity (recall that $p(x)=\sum_{j=1}^d a_j\cos(2 \pi j n_k x)$)
\begin{align*}
\left \lVert \sum_{i=1}^M T_i' \right \rVert_2 & =  \left \lVert \sum_{i=1}^M \sum_{k \in \Delta_i'} c_k p(n_kx) \right \rVert_2  \\
&\leq \sum_{j=1}^d |a_j| \left \lVert \sum_{i=1}^M \sum_{k \in \Delta_i'} c_k \cos(2 \pi j n_k) \right \rVert_2 \\
& \ll \left( \sum_{i=1}^M \sum_{k \in \Delta_i'} c_k^2 \right)^{1/2} \\
& = o( \sqrt{h(N)}).
\end{align*}

Lastly, using Lemma \ref{lem:approx_filtration} (ii), we obtain
\begin{align*}
\left \lVert \sum_{i=1}^M Y_i -T_i \right \rVert_2 & \leq \sum_{i=1}^M \sum_{k \in \Delta_i} c_k \left \lVert p(n_k \cdot ) - \varphi_k(\cdot ) \right \rVert_2 \\
& \leq \frac{N}{h(N)^C} \\
& = o(\sqrt{h(N)}).
\end{align*}

Using the previous estimates on $s_M$ together with Lemma \ref{lem:est_VM_sM} and Proposition \ref{prop:fourth_moment}, we have 
\[
\frac{  \sum_{i=1}^M \E \left[Y_i^4 \right] + \E \left[ (V_M - s_M)^2\right]}{s_M^2} \longrightarrow 0, \quad \text{  as } M \rightarrow \infty.
\]
By Proposition \ref{prop:martingale_clt} this implies
\[
\frac{Y_1 + \ldots + Y_M}{\sqrt{s_M}} \stackrel{w}{\longrightarrow} \mathcal{N}(0,1), \quad \text{ as } M \rightarrow \infty.
\]

Taking all the previous estimates together, we have shown that under Assumption \ref{Ass:dioph_cond_dream}

\[
\frac{\sum_{k \leq N}c_ k p(n_kx)}{\left | \left | \sum_{k \leq N}c_{k}p(n_kx) \right | \right|_2}= \frac{\sum_{i=1}^M Y_i}{s_M} \cdot \frac{s_M}{\left | \left | \sum_{k \leq N}c_{k}p(n_kx) \right | \right|_2} + r_N,
\]
where $r_N$ is a quantity with $||r_N||_2 \longrightarrow 0$. By \eqref{assum3_case}, we have 
\[
\frac{s_M}{\left | \left | \sum_{k \leq N}c_{k}p(n_kx) \right | \right|_2} \longrightarrow 1
\]
which completes the proof of (i) by virtue of Slutsky's Theorem.
 \end{proof}

 \begin{cor}
    \label{cor:clt_general_f_ck_big} 
    Let the same assumptions as in Proposition \ref{prop:clt_cosine_big_ck} hold and let $A \subseteq [1,N]$ be a set such that we have $\ldots$
    \begin{itemize}
        \item[](i) $\ldots$ Assumption \ref{Ass:dioph_cond_dream} and 
    \begin{equation}\label{eq:lower_density_general}
    \liminf_{N \rightarrow \infty} \frac{\lVert \sum_{k \in A} c_k p(n_k x) \rVert_2}{\lVert \sum_{k \leq N} c_k p(n_k x) \rVert_2} > 0.
    \end{equation}
    Then
    \[
    \frac{1}{\left | \left| \sum_{k \leq N} c_k p(n_k x) \right| \right|_2} \sum_{k \in A} c_k p(n_k x) \stackrel{w}{\longrightarrow} \mathcal{N}(0,1),
    \]
    \item[] (ii) $\ldots$ Assumption \ref{Ass:dioph_cond_homog} and
    \begin{equation}\label{eq:lower_density_hom}
    \liminf_{N \rightarrow \infty} \frac{\sum_{k \in A} c_k^2}{\sum_{k \leq N}c_k^2} > 0.
    \end{equation}
    Then we have
    \[
    \frac{1}{||p||_2 \sqrt{\sum_{k \in A} c_k^2}} \sum_{k \in A} c_k p(n_k x) \stackrel{w}{\longrightarrow} \mathcal{N}(0,1).
    \]
     \end{itemize}
\end{cor}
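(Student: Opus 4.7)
The plan is to reduce Corollary \ref{cor:clt_general_f_ck_big} to Proposition \ref{prop:clt_cosine_big_ck} by applying the latter to the modified weight array $\tilde{c}_{k,N} := c_{k,N} \mathds{1}_A(k)$, so that $\sum_{k \in A} c_k p(n_k x) = \sum_{k \leq N} \tilde{c}_{k,N} p(n_k x)$. One then checks that $(\tilde{c}_{k,N})$ inherits the hypotheses of Proposition \ref{prop:clt_cosine_big_ck}: the Hadamard-gap condition on $(n_k)$ is unchanged; $\tilde{c}_{k,N} \leq 1$ is trivial; the Diophantine count $\tilde{L}(N,d)$ is pointwise dominated by $L(N,d) = o(h(N))$; and the lower density hypothesis (either \eqref{eq:lower_density_general} combined with \eqref{non_deg_var}, or \eqref{eq:lower_density_hom} directly) yields $\tilde{h}(N) := \sum_{k \leq N} \tilde{c}_{k,N}^2 \gtrsim h(N) \to \infty$. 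In particular $\tilde{L}(N,d) = o(\tilde{h}(N))$, and the variance lower bound analogous to \eqref{non_deg_var} for $\tilde{c}$ in case (i) follows by combining \eqref{eq:lower_density_general} with the original variance lower bound.

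The one subtle point is that $\tilde{c}_{k,N} > N^{-\beta}$ fails whenever $k \notin A$. However, inspection of the proof of Proposition \ref{prop:clt_cosine_big_ck} reveals that this lower bound is used \emph{exclusively} through its consequence $h(N) \geq N^{1-2\beta}$, which appears in Lemmas \ref{lem:est_Ti_Yi}, \ref{lem:est_VM_sM}, and Proposition \ref{prop:fourth_moment} to control polynomial-in-$N$ error terms. Since $\tilde{h}(N) \geq \delta h(N) \geq \delta N^{1-2\beta}$ in both parts of the corollary, this polynomial relationship is preserved, and every such estimate transfers after replacing $h$ by $\tilde{h}$. Adapting the block construction accordingly (blocks $\Delta_i$ with $\sum_{k \in \Delta_i} \tilde{c}_{k,N}^2 \asymp \tilde{h}(N)^{\gamma}$ and buffer blocks of length $\lceil K \log_q \tilde{h}(N) \rceil$), the entire argument of Proposition \ref{prop:clt_cosine_big_ck} carries over essentially verbatim to the restricted sum.

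This yields the conclusion of part (ii) directly, with the stated normalization $\|p\|_2 \sqrt{\sum_{k \in A} c_k^2}$. For part (i), one first obtains weak convergence to $\mathcal{N}(0,1)$ after normalizing by $\|\sum_{k \in A} c_k p(n_k x)\|_2$, and then passes to the claimed normalization via Slutsky's theorem, exploiting that \eqref{eq:lower_density_general} keeps the ratio $\|\sum_{k \in A} c_k p(n_k x)\|_2 / \|\sum_{k \leq N} c_k p(n_k x)\|_2$ bounded away from zero. The main obstacle is entirely bookkeeping: one must verify that every appearance of the lower bound $c_{k,N} > N^{-\beta}$ in the proof of Proposition \ref{prop:clt_cosine_big_ck} acts only through the polynomial growth $h(N) \geq N^{1-2\beta}$. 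This is tedious but requires no new conceptual input.
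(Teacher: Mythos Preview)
Your approach coincides with the paper's: restrict to $A$, check that the hypotheses of Proposition \ref{prop:clt_cosine_big_ck} carry over to the restricted weights, and apply the proposition. You are in fact more careful than the paper about the one delicate point, namely that $\tilde{c}_{k,N} > N^{-\beta}$ fails on $A^c$; your observation that this lower bound enters the proof of Proposition \ref{prop:clt_cosine_big_ck} only through the consequence $h(N) \geq N^{1-2\beta}$ is correct and is exactly what the paper's three-sentence argument is implicitly relying on.

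There is, however, a gap in your handling of the normalization in part (i). Slutsky's theorem requires the ratio $\lVert\sum_{k \in A}c_kp(n_kx)\rVert_2 \big/ \lVert\sum_{k \leq N}c_kp(n_kx)\rVert_2$ to \emph{converge}, not merely to stay bounded away from zero; with the ratio only known to lie in some interval $[\rho,1]$ you obtain at best a scale mixture of Gaussians, not $\mathcal{N}(0,1)$. In fact this cannot be repaired: the stated conclusion of (i), with $\lVert\sum_{k \leq N}\rVert_2$ in the denominator, fails in general under \eqref{eq:lower_density_general} alone --- already for $p(x)=\cos(2\pi x)$, $n_k = 2^k$, isotropic weights, and $A=[1,N/2]$, the ratio equals $1/\sqrt{2}$ and the limit is $\mathcal{N}(0,1/2)$. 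This appears to be a typo in the paper's statement: the intended denominator is $\lVert\sum_{k \in A}c_kp(n_kx)\rVert_2$, which is precisely what a direct application of Proposition \ref{prop:clt_cosine_big_ck}(i) to the restricted sum yields, is parallel to the normalization in part (ii), and is consistent with the paper's own proof (which simply ``applies Proposition \ref{prop:clt_cosine_big_ck}'' and says nothing about changing the denominator). With that correction no Slutsky step is needed and your argument is complete.
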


\begin{proof}
    By restricting to $k \in A$, we take a subsequence of $n_k$ which thus in particular satisfies the Hadamard gap condition. By \eqref{eq:lower_density_general} and \eqref{eq:lower_density_hom}, the respective Assumptions \ref{Ass:dioph_cond_dream} and \ref{Ass:dioph_cond_homog} also hold in this restricted setup. Therefore, an application of Proposition \ref{prop:clt_cosine_big_ck} proves the claim.
\end{proof}

\subsection{The clt in the general case}
\label{sec_final_CLT}
In this section, we stepwise extend the central limit theorem from Proposition \ref{prop:clt_cosine_big_ck} to the more general settings of our main results, Theorems \ref{thm:general_clt} and \ref{thm:iidclt}. We begin by removing the (asymptotic) lower bound on $c_k$.

 \begin{lem}
    \label{lem:clt_cosine_general_weights}
Let $p(x) = \sum_{j=1}^d \cos( 2 \pi j x)$ be a mean zero even trigonometric polynomial and, for $N \in \N$, let $(c_{k})_{k \leq N}$ be a scheme of weights satisfying Assumption \ref{Ass:Condition_weights}. Then we have the following:
\begin{itemize}
    \item[(i)]
    If Assumption \ref{Ass:dioph_cond_dream} is satisfied (with $f = p$), then
\[
\frac{\sum_{k \leq N}c_k p(n_kx)}{\left | \left | \sum_{k \leq N}c_{k}p(n_kx) \right | \right|_2} \stackrel{w}{\longrightarrow } \mathcal{N}(0,1).
\]
 \item[(ii)]
   If Assumption \ref{Ass:dioph_cond_homog} is satisfied, then 
\[
\frac{\sum_{k \leq N}c_ k p(n_kx)}{\sqrt{h(N)} ||p||_2} \stackrel{w}{\longrightarrow } \mathcal{N}(0,1).
\]
\end{itemize}
\end{lem}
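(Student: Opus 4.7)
The plan is to reduce Lemma \ref{lem:clt_cosine_general_weights} to Proposition \ref{prop:clt_cosine_big_ck} by partitioning the index set $[1,N]$ into two blocks on which the weights are comparable in size (up to a polynomial factor), upgrading the two resulting marginal central limit theorems to a joint one, and absorbing the remaining indices into a negligible error. First I would discard the indices with $c_k \leq N^{-1/2}$: their total squared weight is at most $1 = o(h(N))$, so their contribution to $S_N$ is $o(\sqrt{h(N)})$ in $L^2$. The next step is a pigeonhole argument to locate a gap in the distribution of the $c_k$: fix a small parameter $\eta > 0$ and, with $\beta_0 = \tfrac{1}{4} + \delta$ as in Proposition \ref{prop:clt_cosine_big_ck}, consider the $\lfloor 2\delta/\eta \rfloor$ multiplicative shells of weights whose exponent lies in $[1/4 - \delta, 1/4 + \delta]$. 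Since $\sum_k c_k^2 = h(N)$, at least one such shell $S^* = \{k : N^{-\beta^*-\eta} \leq c_k < N^{-\beta^*}\}$ satisfies $\sum_{k \in S^*}c_k^2 \leq \tfrac{\eta}{2\delta} h(N) = o(h(N))$ as $\eta \to 0$; removing it produces a negligible $L^2$-error and leaves a multiplicative gap of $N^{\eta}$ between the two blocks $A := \{k : c_k \geq N^{-\beta^*}\}$ and $B := \{k : N^{-1/2} \leq c_k \leq N^{-\beta^*-\eta}\}$.

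On $A$, the lower bound $c_k \geq N^{-\beta^*} \geq N^{-\beta_0}$ lets me apply Proposition \ref{prop:clt_cosine_big_ck} directly to the sub-array $(c_k)_{k \in A}$ (for which the Hadamard-gap and Diophantine conditions are inherited), yielding a CLT for $S_A := \sum_{k \in A}c_k p(n_kx)$. On $B$, I would renormalize by $M_B := \max_{k \in B} c_k \leq N^{-\beta^* - \eta}$; the renormalized weights $\tilde c_k := c_k/M_B$ lie in $[N^{\beta^* + \eta - 1/2}, 1]$, and since $\beta^* + \eta - 1/2 \geq -\beta_0$ by the choice of $\beta^*$, Proposition \ref{prop:clt_cosine_big_ck} applies again. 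Scaling back delivers a CLT for $S_B := \sum_{k\in B}c_k p(n_kx)$. If either $h_A(N) := \sum_{k \in A}c_k^2$ or $h_B(N) := \sum_{k \in B}c_k^2$ is $o(h(N))$, the corresponding block is $o(\sqrt{h(N)})$ in $L^2$ by Lemma \ref{semitriv_lem} and can be ignored; otherwise both are genuinely of order $h(N)$.

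The main obstacle will be to promote the two marginal CLTs to a joint one, which by Gaussianity reduces to showing that $\langle S_A, S_B\rangle = o(\sqrt{h_A(N)\,h_B(N)})$. Expanding via Parseval,
\[
|\langle S_A, S_B\rangle| \ll \sum_{1 \leq j,j' \leq d}\ \sum_{\substack{k \in A,\ \ell \in B \\ j n_k = j' n_\ell}} c_k c_\ell.
\]
The Hadamard gap condition forces $|k-\ell| \leq \log_q(d)$ for any contributing pair, so for each fixed $k \in A$ there are only boundedly many admissible $\ell \in B$, each with $c_\ell \leq N^{-\beta^*-\eta}$. Combining this with the estimate $\sum_{k \in A}c_k \leq N^{\beta^*}\sum_{k \in A}c_k^2 \leq N^{\beta^*}h(N)$ (which uses $c_k \geq N^{-\beta^*}$ on $A$) produces $|\langle S_A, S_B\rangle| \ll N^{-\eta}h(N) = o(h(N))$, which is $o(\sqrt{h_A h_B})$ in the regime $h_A, h_B \asymp h(N)$. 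Combining the resulting joint CLT with the negligibility of $\{c_k \leq N^{-1/2}\}$ and $S^*$ then yields (i) using \eqref{non_deg_var} of Assumption \ref{Ass:dioph_cond_dream}, while (ii) additionally uses the homogeneous Diophantine bound of Assumption \ref{Ass:dioph_cond_homog} to ensure $\lVert S_N \rVert_2 = \lVert p\rVert_2\sqrt{h(N)}(1 + o(1))$.
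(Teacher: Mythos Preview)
Your strategy mirrors the paper's almost exactly: discard weights below $N^{-1/2}$, use pigeonhole near exponent $1/4$ to locate a sparse shell, split the remaining indices into two size-comparable blocks, apply Proposition~\ref{prop:clt_cosine_big_ck} (with rescaling on the small block) to each, and show the blocks are asymptotically uncorrelated via the Hadamard gap condition together with the multiplicative gap in weight sizes. Two points in the execution, however, do not go through as written.

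First, with $\eta>0$ fixed, the discarded shell satisfies only $\sum_{k\in S^*}c_k^2 \le \tfrac{\eta}{2\delta}\,h(N)$, which is \emph{not} $o(h(N))$; your claim of a ``negligible $L^2$-error'' is therefore false. The paper avoids this by taking $L=\lfloor\sqrt{\log N}\rfloor$ shells of width $\delta/L$ inside the exponent range $[1/4,\,1/4+\delta]$, so that the pigeonhole shell $C_{\ell_0}$ carries mass at most $h(N)/L=o(h(N))$, while the resulting gap $N^{\delta/L}$ still diverges (since $(\delta/L)\log N\to\infty$). Your scheme can be repaired by letting $\eta=\eta(N)\to 0$ with $\eta\log N\to\infty$, or by a careful double-limit argument at the end, but as stated the shell is not negligible.

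Second, the dichotomy ``either $h_A=o(h)$ or $h_A\asymp h$'' is not exhaustive: since $\beta^*$ (hence $A$ and $B$) depends on $N$, the ratio $h_A(N)/h(N)$ may oscillate, and Proposition~\ref{prop:clt_cosine_big_ck} cannot even be invoked on a block whose mass fails to tend to infinity. The paper handles this by contradiction: assume the CLT fails along some $(N_k)$, pass to a further subsequence along which $\sigma_A/\sigma\to\bar\lambda\in[0,1]$, and treat the three cases $\bar\lambda=0$, $\bar\lambda=1$, $0<\bar\lambda<1$ separately. You will need the same subsequence device to make the case distinction rigorous.
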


\begin{proof}
 We can assume without loss of generality that $c_k \geq N^{-1/2}$ for all $k$: This holds since we get by Lemma \ref{semitriv_lem}
     \[
      \Big \lVert\sum_{\substack{k \leq N\\ c_k \leq N^{-1/2}}}c_kp(n_kx) \Big \rVert_2^2 \leq 
     \left \lVert p \right \rVert_2^2\sum_{\substack{k \leq N\\c_k \leq N^{-1/2}}}c_k^2 \leq \left   \lVert p \right \rVert_2^2
     \]
     and thus after dividing by the diverging quantity $ \sqrt{h(N)}$, this term converges to $0$ in probability. We define 
     \[
     C = \left \{k \leq N : N^{-1/4-\delta} \leq c_k \leq N^{-1/4} \right \}
     \]
     where $\delta < 1/12$.
     Further, we define $L = L(N) = \lfloor \sqrt{\log N} \rfloor $ and partition $C$ into 
     $C = \bigcup_{\ell = 0}^{L-1} C_{\ell} $ where
     \[
     C_{\ell} := \left \{k \in C: N^{-1/4-\delta(\ell+1)/L} < c_k \leq N^{-1/4-\delta \ell/L} \right \}, \quad \ell =0, \ldots, L-1.
     \]

     Since, trivially $\sum_{k \in C}c_k^2 \leq h(N)$, by the pigeonhole principle there is some $\ell_0 \in \{0,\ldots,L-1\}$ such that
     \[
     \sum_{k \in C_{\ell_0}}c_k^2 \leq \frac{h(N)}{L}.
     \]
     Now let $\beta_{\ell_0} = \beta_{\ell_0}(N) = \frac{1}{4} + \delta \frac{\ell_0}{L}$ and define
     \[
     A = \left \{k \leq N: N^{-1/2}\leq c_k \leq N^{-\beta_{\ell_0}  - \frac{\delta}{L}} \right \}, \quad B = \left \{k \leq N: c_k \geq N^{-\beta_{\ell_0}} \right \}.
     \]

We can now write
\begin{align*}
\sum_{k=1}^N c_k p(n_kx) & = \sum_{k \in A} c_k p(n_kx) + \sum_{k \in B} c_k p(n_kx) + \sum_{k \in C_{\ell_0}} c_k p(n_k x) \\
& =: S_N^A(x) + S_N^B(x) + \sum_{k \in C_{\ell_0}} c_k p(n_k x).
\end{align*}
By Lemma \ref{semitriv_lem}, we have
\[
\Big \lVert \sum_{k \in C_{\ell_0}} c_k p(n_kx)  \Big \rVert_2^2 \leq ||p||_2^2 \Big \lVert \sum_{k \in C_{\ell_0}} c_k^2  \Big \rVert_2^2 \leq  \frac{||p||_2^2 h(N)}{L} = o(h(N)).
\]

This shows that under Assumption \ref{Ass:dioph_cond_dream} we have
the following limit in probability:
\[
\lim_{N \rightarrow \infty} \frac{\sum_{k \in C_{\ell_0}} c_k p(n_kx) }{\lVert \sum_{k \leq N} c_k p(n_kx) \rVert_2} = 0.
\]

Similarly, under Assumption \ref{Ass:dioph_cond_homog} we obtain
\[
\lim_{N \rightarrow \infty} \frac{\sum_{k \in C_{\ell_0}} c_k p(n_kx) }{||p||_2\sqrt{h(N)}} = 0.
\]
Thus, we are left to analyze the asymptotic stochastic behavior of $S_N^A$ and $S_N^B$ as $N \rightarrow \infty$.

In the following, we provide a proof under Assumption \ref{Ass:dioph_cond_homog}. The claim under Assumption \ref{Ass:dioph_cond_dream} can be shown under minimal changes. We introduce the following quantities
\[
\sigma_N^A = \lVert p \rVert_2 \sqrt{\sum_{k \in A}c_k^2}, \quad \sigma_N^B =  \lVert p \rVert_2 \sqrt{\sum_{k \in B}c_k^2}, \quad \sigma_N = \sqrt{(\sigma_N^A)^2 + ( \sigma_N^B)^2}.
\]

Let us assume that $\frac{S_N}{||p||_2 \sqrt{h(N)}}$ does not converge to the normal distribution as $N \longrightarrow \infty $. Then there exists a sequence $(N_k)_{k \in \N}$ and some $t \in \R$ such that
\[
\lim_{k \rightarrow \infty} \mathbb{P} \left[ \frac{S_{N_k}}{||p||_2 \sqrt{h(N_k)}} \leq t \right] \neq \Phi(t),
\]
where $\Phi$ denotes the standard normal distribution function.
Let $\lambda_N := \frac{\sigma_N^A}{\sigma_N}$ which satisfies $0 \leq \lambda_N \leq 1$. Then there exists a subsequence $(N_{k_{\ell}})_{\ell \in \N} \subseteq (N_k)_{k \in \N}$ and some $\bar{\lambda} \in [0,1]$ such that $\lambda_{N_{k_{\ell}}} \to \bar{\lambda}$. We distinguish the following cases:

Case $1$: $\bar{\lambda} = 0$. In that case, we have $\sigma_{N_{k_\ell}}^B/ \sigma_{N_{k_\ell}} \longrightarrow 1$ and using Lemma \ref{semitriv_lem} shows
\[
\frac{S_{N_{k_{\ell}}^A}}{\sigma_{N_{k_{\ell}}}} = \Lambda_{N_{k_{\ell}}} \frac{S_{N_{k_{\ell}}}^A}{\sigma_{N_{k_{\ell}}}^A} \stackrel{\mathbb{P}}{\longrightarrow} 0.
\]
Consequently, this leads to
\[
\frac{S_{N_{k_{\ell}}}^B}{\sigma_{N_{k_{\ell}}}} = \left(1 - o(1) \right )\frac{S_{N_{k_{\ell}}}^B}{\sigma_{N_{k_{\ell}}}^B}.
\]
By Corollary \ref{cor:clt_general_f_ck_big}, we now have 
\[
\frac{S_{N_{k_\ell}}}{\sigma_{N_{k_\ell}}} = \frac{S_{N_{k_\ell}}^A}{\sigma_{N_{k_\ell}}} + \frac{S_{N_{k_\ell}}^B}{\sigma_{N_{k_\ell}}^B}(1 -o(1)) \stackrel{w}{\longrightarrow} \mathcal{N}(0,1),
\]
where we have employed Slutsky's theorem. This contradicts our assumption of 
\[\lim_{k \rightarrow \infty} \mathbb{P} \left[ \frac{S_{N_k}}{||p||_2 \sqrt{h(N_k)}} \leq t \right] \neq \Phi(t).\]

Case $2$: $\bar{\lambda} = 1$. Analogously to before, we get $\frac{\sigma_{N_{k_{\ell}}^A }}{\sigma_{N_{k_{\ell}}}} \to 1$ and thus
\[
\lim_{\ell \rightarrow \infty} \frac{S_{N_{k_\ell}}}{\sigma_{N_{k_\ell}}} \stackrel{w}{=} \lim_{\ell \rightarrow \infty} \frac{S_{N_{k_\ell}}^A}{\sigma_{N_{k_\ell}}^A},
\]
provided the latter exists. Let $c_k \in A$, then $\tilde{c}_k := N^{1/4} c_k $ satisfies $N^{-1/4} \leq \tilde{c}_k \leq 1$. We have
\[
\frac{S_{N_{k_\ell}}^A}{\sigma_{N_{k_\ell}}^A} = \frac{1}{||p||_2 \sqrt{\sum_{k \in A} c_k^2}} \sum_{\substack{k=1 \\ k \in A} }^{N_{k_\ell}} c_k p(n_kx) = \frac{1}{||p||_2 \sqrt{\sum_{k \in A} \tilde{c}_k^2}} \sum_{\substack{k=1 \\ k \in A} }^{N_{k_\ell}} \tilde{c}_k p(n_kx).
\]
The right-hand side above converges to a standard normal distributed random variable by Corollary \ref{cor:clt_general_f_ck_big}, a contradiction.

\par{}
Case $3$: $0 < \bar{\lambda} < 1$. Then there exists $\rho > 0$ such that for all sufficiently large $\ell \in \N$, we have
\[\frac{\sigma_{N_{k_\ell}}^A}{\sigma_{N_{k_\ell}}} > \rho, \quad \frac{\sigma_{N_{k_\ell}}^B}{\sigma_{N_{k_\ell}}} > \rho.\]

By rescaling the $c_k$ in $A$ as in Case 2 and applying Corollary \ref{cor:clt_general_f_ck_big}, we thus have
\[
\frac{S_{N_{k_\ell}}^B}{\sigma_{N_{k_\ell}}^{B}} \stackrel{w}{\longrightarrow} \mathcal{N}(0,1), \qquad \frac{S_{N_{k_\ell}}^A}{\sigma_{N_{k_\ell}}^{A}} \stackrel{w}{\longrightarrow} \mathcal{N}(0,1).
\]

In general, it is not clear that the sum of both random variables above converges to a sum of normal distributed random variables. In order to ensure such a limit, we need to show that $S_{N_{k_\ell}}^A$ and $S_{N_{k_\ell}}^B$ are asymptotically uncorrelated (this is the whole point in the construction of $C$ to separate $A$ and $B$).
\\
\par{}
Let $1 \leq j,j' \leq d$ and recall that, by lacunarity, $n_k j - n_{k'}j' = 0$ is only possible if $ | k- k'| \leq \log_q(d) \leq d$. This implies
\[
\begin{split}
    \E[S_{N_{k_\ell}}^{A}S_{N_{k_\ell}}^{B}] &= \sum_{j,j' \leq d}a_ja_{j'}\sum_{k \in A,k' \in B} c_k c_{k'}
    \mathds{1}_{[n_kj-n_{k'}j' = 0]} 
    \\&\leq \sum_{j,j' \leq d}a_ja_{j'}\sum_{k' \in B}c_{k'}\sum_{\substack{k \in A\\\lvert k - k' \rvert \leq d}} c_k
    \\&\leq  d\sum_{j,j' \leq d}a_ja_{j'}{N_{k_\ell}}^{-{\beta_{\ell_0}} - \tfrac{\delta}{L}}\sum_{k' \in B}c_{k'}.
\end{split}
\]

Since all $c_{k'} \in B$ satisfy $c_{k'} \geq {N_{k_\ell}}^{- \beta_{\ell_0}}$, we have
\[
 \sum_{k' \in B}c_{k'}^2 \geq \left({N_{k_\ell}}^{-\beta_{\ell_0} } \right)^2 |B| \iff |B| \leq \frac{1}{\left({N_{k_\ell}}^{-\beta_{\ell_0}} \right)^2} \cdot \sum_{k' \in B} c_{k'}^2.
\]
Therefore, by the Cauchy--Schwarz inequality

\[
\sum_{k' \in B}c_{k'} \leq \sqrt{|B| \cdot \sum_{k' \in B}c_{k'}^2}
\leq  {N_{k_\ell}}^{\beta_{\ell_0}} \sum_{k' \in B} c_{k'}^2 \leq  {N_{k_\ell}}^{ \beta_{\ell_0}} \cdot h({N_{k_\ell}}).
\]

Consequently, we obtain

\begin{equation}\label{uncorrelated}
\begin{split}
\E \left[S_{N_{k_\ell}}^{A}S_{N_{k_\ell}}^{B} \right] &\leq d\sum_{j,j' \leq d}a_ja_{j'}h({N_{k_\ell}}){N_{k_\ell}}^{-\delta/L} \ll_d h({N_{k_\ell}}) {N_{k_\ell}}^{-\delta/L}= o(\sigma^2_{N_{k_\ell}}).
\end{split}
\end{equation}
Since $\sigma_{N_{k_\ell}}^A \sigma_{N_{k_\ell}}^B \geq \rho^2 \sigma_{N_{k_\ell}}^2$, this implies
\begin{equation}\label{comb_CLT}
\lim_{\ell \rightarrow \infty} \frac{\E \left[S_{N_{k_\ell}}^{A}S_{N_{k_\ell}}^{B} \right]}{\sigma_{N_{k_\ell}}^A \sigma_{N_{k_\ell}}^B} = 0.
\end{equation}

This shows that $S_{N_{k_\ell}}^{A}$ and $S_{N_{k_\ell}}^B$ are asymptotically uncorrelated, and thus we get
\[
\frac{S_{N_{k_\ell}}}{\sigma_{N_{k_\ell}}} \stackrel{w}{\longrightarrow} \mathcal{N}(0,\bar{\lambda} + (1 - \bar{\lambda})) =  \mathcal{N}(0,1).
\]
This contradicts our assumption and thus proves the claim in Lemma \ref{lem:clt_cosine_general_weights} (ii).
\end{proof}

 Having the central limit theorem settled for lacunary sums generated by an even trigonometric polynomial, we can now move on and prove Theorem \ref{thm:iidclt}.

\begin{proof}[Proof of Theorem \ref{thm:iidclt}]
Let us start with the case, where $f(x)$ is a general mean $0$ trigonometric polynomial, i.e.
\[
f(x) = \sum_{j=1}^d \left[ a_j \cos(2 \pi j x) + b_j \sin(2 \pi j x) \right] = \sum_{j=1}^d  a_j \cos(2 \pi j x) + \sum_{j=1}^d b_j \sin(2 \pi j x) = f_1(x) + f_2(x),
\]
and hence
\[
S_N = \sum_{k=1}^N c_k f_1(n_k x) + \sum_{k=1}^N c_k f_2(n_k x) =: S_N^{(1)} + S_N^{(2)}.
\]
The summands $S_N^{(i)}$, $i=1,2$ are uncorrelated and hence on the scale of the central limit theorem, we can look at each $S_N^{(i)}$ separately. By Lemma \ref{lem:clt_cosine_general_weights}, it immediately follows that $S_N^{(1)}$ satisfies a clt, when divided by its standard deviation. The clt for $S_N^{(2)}$ can be obtained under insignificant changes and thus $\frac{S_N}{||f||_2 \sqrt{h(N)}}$ satisfies a clt.
\par{}
Now let us consider a general $f \in \mathbb{L}_2(\T)$
satisfying Assumption \ref{Ass_f}.
By renormalizing, we can assume without loss of generality that we can write
 \[
 f(x) = \sum_{j=1}^{\infty} \left[ a_j \cos(2 \pi j x) + b_j \sin(2 \pi j x) \right]
 \]
 with $ \max \{ |a_j|, |b_j| \} \leq \frac{1}{j^{\rho}}$ for all $j \in \N$. 
 
 Let $p(x) = \sum_{j=1}^d \left[ a_j \cos(2 \pi j x) + b_j \sin(2 \pi j x) \right] $ and $r(x) = \sum_{j=d+1}^{\infty} \left[ a_j \cos(2 \pi j x) + b_j \sin(2 \pi j x) \right]$ and let us define
 \begin{align*}
 S_N  & = \sum_{k=1}^N c_k p(n_k x) + \sum_{k=1}^N c_k r(n_kx) \\
 & =: S_N^p + S_N^r. 
\end{align*}

We write $S_N^r = S_{N,e}^r + S_{N,o}^r$ where 
\[
S_{N,e}^r = \sum_{k \leq N}c_k\sum_{j=d+1}^{\infty} \left[ a_j \cos(2 \pi j x) \right], \quad 
S_{N,o}^r = \sum_{k \leq N}c_k\sum_{j=d+1}^{\infty} \left[ b_j \sin(2 \pi j x) \right].
\]
Clearly, by orthogonality, $\lVert S_N^r \rVert_2 = \lVert S_{N,e}^r \rVert_2 + \lVert S_{N,o}^r \rVert_2$; We will only provide an estimate for $ \lVert S_{N,e}^r \rVert_2$, the estimate for $\lVert S_{N,o}^r \rVert_2 $
works completely analogously.\\

We observe that
\[
\begin{split}
    \lVert S_{N,e}^r \rVert_2^2 & = \sum_{1 \leq k,k'\leq N}c_kc_{k'}\sum_{j,j' > d}a_ja_{j'}\mathds{1}_{[jn_k - j'n_{k'} = 0]} \\
    &= \sum_{1\leq k\leq N}c_k^2\sum_{j > d}a_j^2 + 2\sum_{1 \leq k < k' \leq N}c_kc_{k'}\sum_{j,j' > d}a_ja_{j'}\mathds{1}_{[jn_k - j'n_{k'} = 0]}
    \\&\ll \sum_{j > d}\frac{1}{j^{2\rho}} h(N)^2 + \sum_{1 \leq k < k' \leq N}c_kc_{k'}\sum_{j > d}a_j\sum_{j' > d} a_{j'}\mathds{1}_{[jn_k - j'n_{k'} = 0]},
\end{split}
\]
where the last estimate employs $|a_j| \leq j^{- \rho}$ for some $\rho > \frac{1}{2}$. Note that 
$jn_k - j'n_{k'} = 0$ and thus $j' = \frac{jn_{k}}{n_{k'}} < jq^{k-k'}$. This means we obtain (with the formal definition of $c_k = 0$ for $k > N$) that
\[
\begin{split}
\sum_{1 \leq k < k' \leq N}c_kc_{k'}\sum_{j > d}a_j\sum_{j' > d} a_{j'}\mathds{1}_{[jn_k - j'n_{k'} = 0]}
&\leq \sum_{1 \leq k < k' \leq N}c_kc_{k'}\sum_{j > d}\frac{1}{j^{\rho}}\frac{1}{j^{\rho}}q^{k-k'}
\\&=\sum_{j > d}\frac{1}{j^{2\rho}}\sum_{1 \leq k \leq N}c_k \sum_{e \leq N}c_{k+e}q^{e}
\\&= \sum_{j > d}\frac{1}{j^{2\rho}} \sum_{e \leq N}q^{e}\sum_{1 \leq k \leq N}c_kc_{k+e}
\\&\leq \sum_{j > d}\frac{1}{j^{2\rho}} \sum_{e \leq N}q^{e}\sum_{1 \leq k \leq N}c_k^2
\\&\ll_q h(N)\sum_{j > d}\frac{1}{j^{2\rho}}.
\end{split}
\]
For any given $\varepsilon > 0$, we can choose $d \in \N$ sufficiently large such that $\sum_{j > d}\frac{1}{j^{2\rho}} \leq \varepsilon$ showing that $\lVert S_{N}^r \rVert_2^2 \leq \varepsilon h(N)$.\\

We now define
\[
\sigma_N^f := \sqrt{h(N)} ||f||_2.
\]
Then by construction, we have
\[
\left (1- \varepsilon \right) \sigma_N^p \leq \sigma_N^f \leq \left (1+ \varepsilon \right) \sigma_N^p,
\]
for any given $\varepsilon > 0$, provided the degree $d \in \N$ of $p$ is sufficiently large.
 We now get (using the elementary estimate $\mathbb{P}[A \cap B] \geq \mathbb{P}[A] -\mathbb{P}[B^c]$)
 \begin{align*}
     \mathbb{P} \left[ \frac{S_N}{\sigma_N^f} \geq t \right] & \geq \mathbb{P} \left[ \frac{S_N^p}{\sigma_N^f} \geq t - \sqrt[4]{\varepsilon} \right] - 
     \mathbb{P} \left[ \frac{|S_N^r|}{\sigma_N^f} \geq \sqrt[4]{\varepsilon}\right] \\
     & \geq \mathbb{P} \left[ \frac{S_N^p}{\sigma_N^f} \geq t - \sqrt[4]{\varepsilon} \right] - \frac{\mathbb{V} \left[ S_N^r \right]}{\sigma_N^f \sqrt{\varepsilon}} \\
      & \geq \mathbb{P} \left[ \frac{S_N^p}{\left(1+ \varepsilon \right) \sigma_N^p} \geq t - \sqrt[4]{\varepsilon} \right] - \frac{ \sqrt{\varepsilon} }{ \delta } ,
 \end{align*}
 where, in the last step, we used that $\mathbb{V}[S_N^r] \leq \varepsilon \sigma_N^r$ and $\sigma_N^f = ||f||_2 \sqrt{h(N)}$. If we take the $\liminf$ as $N \longrightarrow \infty$ on both sides, by Lemma \ref{lem:clt_cosine_general_weights} we thus obtain 
 \[
 \liminf_{N \rightarrow \infty}\, \mathbb{P} \left[ \frac{S_N}{\sigma_N^f} \geq t \right] \geq 1-\Phi \left( \frac{t- \sqrt[4]{\varepsilon}}{1 + \varepsilon} \right) - \sqrt{\varepsilon},
 \]
 where $\Phi$ denotes the standard normal distribution function. Now we can take the limit as $\varepsilon \longrightarrow 0$ to get
 \[
 \liminf_{N \rightarrow \infty} \,\mathbb{P} \left[ \frac{S_N}{\sigma_N^f} \geq t \right] \geq 1- \Phi (t).
 \]
 The converse, i.e., 
 \[
 \limsup_{N \rightarrow \infty} \mathbb{P} \left[ \frac{S_N}{\sigma_N^f} \geq t \right] \leq 1- \Phi (t)
 \]
 works analogously. In total, we have shown that 
 \[
 \frac{S_N}{\sigma_N^f} \stackrel{w}{\longrightarrow} \mathcal{N}(0,1), \quad \text{as} \quad N \rightarrow \infty.
 \]
\end{proof}

\begin{proof}[Proof of Theorem \ref{thm:general_clt}]
The proof follows analogously to the one of Theorem \ref{thm:iidclt}.
\end{proof}

\bibliographystyle{abbrv}
\bibliography{bibliography}

       {\small
\noindent Lorenz Fr\"uhwirth:\\
 {Faculty of Computer Science and Mathematics, University of Passau, Dr.-Hans-Kapfinger-Strasse 30,}\\
 Passau, Germany\\
 E-mail: {\tt lorenz.fruehwirth@uni-passau.de}

\bigskip

{\small
\noindent Manuel Hauke:\\
Department of Mathematical Sciences,
NTNU Trondheim,
Sentralbygg 2 Gløshaugen,\\Trondheim, Norway\\
 E-mail: {\tt manuel.hauke@ntnu.no}

\end{document}